\documentclass[11pt,twoside]{amsart}
\usepackage{amsmath, amsthm, amscd, amsfonts, amssymb, graphicx, color}
\usepackage[bookmarksnumbered, plainpages]{hyperref}
\addtolength{\topmargin}{-1.5cm}
\linespread {1.3}
\textwidth 17cm 
\textheight 23cm
\addtolength{\hoffset}{-0.3cm}
\oddsidemargin 0cm 
\evensidemargin 0cm
\setcounter{page}{1}


\newtheorem{thm}{Theorem}[section]

\newtheorem{defn}[thm]{Definition}
\newtheorem{rem}[thm]{\bf{Remark}}

\numberwithin{equation}{section}

\begin{document}
\begin{center}\small{In the name of Allah, the Beneficent, the Merciful.}\end{center}
\vspace{1.5cm}

\begin{center}\textbf{COMPLETE CLASSIFICATIONS OF TWO-DIMENSIONAL GENERAL, COMMUTATIVE, COMMUTATIVE JORDAN, DIVISION AND EVOLUTION REAL ALGEBRAS}\end{center}
\vspace{1cm}
\begin{center} U.Bekbaev \end{center}

\begin{center}Department of Science in Engineering, Faculty of Engineering, IIUM, Kuala Lumpur, Malaysia\end{center}
\begin{center} bekbaev@iium.edu.my\end{center}
\begin{center} MSC(2010): Primary: 15A72; Secondary: 22F50, 20H20, 17A60\end{center}
\begin{center} Keywords: structure constants, division, evolution, Jordan algebras.\end{center}


\begin{abstract}  To describe groups of automorphisms of 2-dimensional real algebras one needs a classification of such algebras up to isomorphism. In this paper a complete classifications of two-dimensional general, commutative, commutative Jordan, division and evolution real algebras are given. In the case of evolution algebras their groups of automorphisms and derivation algebras are described as well.
	
\end{abstract}

\vskip 0.2 true cm

\section{Introduction}
The classification problem of finite dimensional algebras is important in algebra. In this paper we consider such problem for two-dimensional algebras over the field of real numbers $\mathbb{R}$. We provide for the classes of two-dimensional general, commutative, commutative Jordan, division and evolution real algebras the corresponding lists of algebras, given by their matrices of structure constants, such that any nontrivial $2$-dimensional real algebra from any considered class is isomorphic to only one algebra from the corresponding list of algebras. Similar results are stated in \cite{A1,G}.  In \cite{G} case the authors state the existence only whereas the uniqueness can not be guaranteed. In \cite{A1} authors consider the problem over algebraically closed fields. Our approach is similar to of \cite{A1} but different than of \cite{G}.
For further information related to such problems one can see \cite{A2,A3,B1,B2,C,D,H,P1,P3}, in \cite{P2} a complete (basis free) classification of 2-dimensional algebras over any field is presented. 

The next section deals with classification of real two-dimensional general, commutative, commutative Jordan and division algebras. In section 3 we deal with such problem for real $2$-dimensional evolution algebras and describe their groups of automorphisms and derivation algebras.
\section{Classification of two-dimensional general, commutative, commutative Jordan algebras}
In this section we essentially use some calculations provided in \cite{A1} for arbitrary basic field case. For the sake of completeness those calculations are presented here one more time. To classify the main part of two-dimensional algebras we use a particular case of the following result from \cite{B}.  Let $n$, $m$  be any natural numbers, $\tau: (G,V)\rightarrow V$ be a fixed linear algebraic representation of an algebraic subgroup $G$ of $GL(m,\mathbb{F})$ on the $n$-dimensional vector space $V$ over a field $\mathbb{F}$.  Assume that there exists a nonempty $G$-invariant subset $V_0$ of $V$ and an algebraic map $P: V_0\rightarrow G$ such that
\[ P(\tau( g,\mathbf{v}))=P(\mathbf{v}) g^{-1}\] whenever  $\mathbf{v}\in V_0$ and $g\in G$. The following result holds true \cite{B}.

\begin{thm} Elements $\mathbf{u},\mathbf{v}\in V_0$ are $G$-equivalent, that is $\mathbf{u}=\tau( g,\mathbf{v})$ for some $g\in G$, if and only if $\tau(
	P(\mathbf{u}),\mathbf{u})=\tau( P(\mathbf{v}),\mathbf{v})$.\end{thm}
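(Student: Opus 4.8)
The plan is to exploit two structural facts and nothing else: that $\tau$ is a genuine linear action, so that $\tau(g_1,\tau(g_2,\mathbf{w}))=\tau(g_1g_2,\mathbf{w})$ and $\tau(e,\mathbf{w})=\mathbf{w}$ for the identity $e\in G$; and the defining twisted-equivariance identity $P(\tau(g,\mathbf{v}))=P(\mathbf{v})g^{-1}$. With these in hand neither implication requires any computation beyond manipulating the action, so the content is entirely formal, and the proof splits into the two directions of the equivalence. Throughout, the $G$-invariance of $V_0$ is what guarantees that $P$ is applied only to points where it is defined.

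For the forward direction I would assume $\mathbf{u}=\tau(g,\mathbf{v})$ for some $g\in G$ and compute $\tau(P(\mathbf{u}),\mathbf{u})$ directly. Since $V_0$ is $G$-invariant, $\mathbf{u}=\tau(g,\mathbf{v})\in V_0$, so $P(\mathbf{u})$ is legitimately defined, and the equivariance of $P$ rewrites it as $P(\mathbf{u})=P(\tau(g,\mathbf{v}))=P(\mathbf{v})g^{-1}$. Substituting, the normal form of $\mathbf{u}$ becomes $\tau(P(\mathbf{v})g^{-1},\tau(g,\mathbf{v}))$, and collapsing the nested action via the homomorphism property gives $\tau(P(\mathbf{v})g^{-1}g,\mathbf{v})=\tau(P(\mathbf{v}),\mathbf{v})$, which is exactly the asserted equality of normal forms.

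For the reverse direction I would assume $\tau(P(\mathbf{u}),\mathbf{u})=\tau(P(\mathbf{v}),\mathbf{v})$ and exhibit the equivalence explicitly. Acting on both sides by $P(\mathbf{u})^{-1}\in G$ and again collapsing nested actions, the left side returns $\tau(P(\mathbf{u})^{-1}P(\mathbf{u}),\mathbf{u})=\tau(e,\mathbf{u})=\mathbf{u}$, while the right side becomes $\tau(P(\mathbf{u})^{-1}P(\mathbf{v}),\mathbf{v})$. Hence $\mathbf{u}=\tau(g,\mathbf{v})$ with the witnessing element $g=P(\mathbf{u})^{-1}P(\mathbf{v})$, which lies in $G$ because $P$ maps into $G$ and $G$ is a group. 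This establishes the required $G$-equivalence.

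There is no genuine obstacle here: the statement is a formal consequence of the cocycle-type relation satisfied by $P$ together with the action axioms, and the map $\mathbf{v}\mapsto\tau(P(\mathbf{v}),\mathbf{v})$ is simply a complete orbit invariant. The only points needing care are bookkeeping ones, namely keeping the order of group multiplication consistent, since the relation $P(\tau(g,\mathbf{v}))=P(\mathbf{v})g^{-1}$ multiplies on the right, which is precisely what makes the factors $g^{-1}g$ and $P(\mathbf{u})^{-1}P(\mathbf{u})$ cancel in the correct order. The real work of the paper lies not in this lemma but in constructing, for each class of algebras, an explicit $G$-invariant set $V_0$ and an explicit algebraic section $P$ satisfying the equivariance hypothesis.
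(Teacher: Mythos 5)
Your proof is correct. The paper itself contains no proof of this statement---it is quoted from \cite{B} and used as a black box---and your two-directional argument (forward: substitute $P(\mathbf{u})=P(\mathbf{v})g^{-1}$ and collapse the nested action; backward: act by $P(\mathbf{u})^{-1}$ to exhibit the witness $g=P(\mathbf{u})^{-1}P(\mathbf{v})$) is exactly the standard formal argument behind that citation, so it in effect makes the paper self-contained on this point.
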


Let $\mathbb{A}$ be any 2-dimensional algebra over $\mathbb{F}$ with multiplication $\cdot$ given by a bilinear map $(\mathbf{u},\mathbf{v})\mapsto \mathbf{u}\cdot \mathbf{v}$ whenever $\mathbf{u}, \mathbf{v}\in \mathbb{A}$. If $e=(e_1,e_2)$ is a
basis for $\mathbb{A}$ as a vector space over $\mathbb{F}$ then one can represent this bilinear map by a matrix \[A=\left(
\begin{array}{cccc}
A^{1}_{1,1} & A^{1}_{1,2}& A^{1}_{2,1}& A^{1}_{2,2} \\
A^{2}_{1,1} & A^{2}_{1,2}& A^{2}_{2,1}& A^{2}_{2,2} \\
\end{array}
\right) \in Mat(2\times 4;\mathbb{F})\] such that \[\mathbf{u}\cdot \mathbf{v}=eA(u\otimes v)\] for any $\mathbf{u}=eu,\mathbf{v}=ev,$
where $u=(u_1, u_2),$ and  $v=(v_1, v_2)$ are column coordinate vectors of $\mathbf{u}$ and $\mathbf{v},$ respectively, $(u\otimes v)=(u_1v_1,u_1v_2,u_2v_1,u_2v_2)$, $e^i\cdot e_j=A^{1}_{i,j}e_1+A^{2}_{i,j}e_2$ whenever $i,j=1,2$.
So the algebra $\mathbb{A}$ is presented by the matrix $A\in Mat(2\times 4;\mathbb{F})$ (called the matrix of MSC of $\mathbb{A}$ with respect to the basis $e$).

If $e'=(e'_1,e'_2)$ is also a basis for $\mathbb{A}$, $g\in G=GL(2,\mathbb{F})$, $e'g=e$ and  $\mathbf{u}\cdot \mathbf{v}=e'B(u'\otimes v')$, where $\mathbf{u}=e'u',\mathbf{v}=e'v'$, then
\[\mathbf{u}\cdot \mathbf{v}=eA(u\otimes v)=e'B(u'\otimes v')=eg^{-1}B(gu\otimes gv)=eg^{-1}B(g\otimes g)(u\otimes v)\] as far as $\mathbf{u}=eu=e'u'=eg^{-1}u',\mathbf{v}=ev=e'v'=eg^{-1}v'$.
Therefore the equality
\[B=gA(g^{-1})^{\otimes 2}\] is valid, where for $g^{-1}=\left(\begin{array}{cccc} \xi_1& \eta_1\\ \xi_2& \eta_2\end{array}\right)$ one has
\[(g^{-1})^{\otimes 2}=g^{-1}\otimes g^{-1}=\left(\begin{array}{cccc} \xi _1^2 & \xi _1 \eta _1 & \xi _1 \eta _1 & \eta _1^2 \\
\xi _1 \xi _2 & \xi _1 \eta _2 & \xi _2 \eta _1 & \eta _1 \eta _2 \\
\xi _1 \xi _2 & \xi _2 \eta _1 & \xi _1 \eta _2 & \eta _1 \eta _2 \\
\xi _2^2 & \xi _2 \eta _2 & \xi _2 \eta _2 & \eta _2^2
\end{array}
\right).\]

\begin{defn} Two-dimensional algebras $\mathbb{A}$, $\mathbb{B}$, given by
their matrices of structural constants $A$, $B$, are said to be isomorphic if $B=gA(g^{-1})^{\otimes 2}$ holds true for some $g\in GL(2,\mathbb{F})$.\end{defn}

Note that the following identities \begin{equation} \label{2.1} Tr_1(gA(g^{-1})^{\otimes 2})=Tr_1(A)g^{-1},\ \ Tr_2(gA(g^{-1})^{\otimes 2})=Tr_2(A)g^{-1}\end{equation} hold true whenever $A\in Mat(2\times 4,\mathbb{F}), g\in GL(2,\mathbb{F})$, where \[Tr_1(A)=(A^1_{1,1}+A^2_{2,1}, A^1_{1,2}+A^2_{2,2}),\ Tr_2(A)=(A^1_{1,1}+A^2_{1,2}, A^1_{2,1}+A^2_{2,2})\] are row vectors.

We divide $Mat(2\times 4,\mathbb{R})$ into the following five disjoint subsets:
\begin{itemize}
	\item[1.] All $A$ for which the system $\{Tr_1(A),Tr_2(A)\}$ is linear independent.
	\item[2.] All $A$ for which the system $\{Tr_1(A),Tr_2(A)\}$ is linear dependent and $Tr_1(A),Tr_2(A)$ are nonzero vectors.
	\item[3.] All $A$ for which $Tr_1(A)$ is nonzero vector and $Tr_2(A)=(0,0)$.
	\item[4.] All $A$ for which $Tr_1(A)=(0,0)$ and $Tr_2(A)$ is nonzero vector.
	\item[5.] All $A$ for which$Tr_1(A)=Tr_2(A)=(0,0)$.
\end{itemize}

Due to (\ref{2.1}) it is clear that algebras with matrices from these different classes can not be isomorphic. We deal with each of these subsets separately.
Further, for the simplicity, we use the notation \[A=\left(\begin{array}{cccc} \alpha_1 & \alpha_2 & \alpha_3 &\alpha_4\\ \beta_1 & \beta_2 & \beta_3 &\beta_4\end{array}\right),\] where
$\alpha_1, \alpha_2, \alpha_3, \alpha_4, \beta_1, \beta_2, \beta_3, \beta_4$ stand for any elements of $\mathbb{R}$.

Now we are going to prove the following classification theorem.

{\begin{thm} Any non-trivial 2-dimensional real algebra is isomorphic to only one of the following listed, by their matrices of structure constants, algebras:
	\[A_{1,r}(\mathbf{c})=\left(
	\begin{array}{cccc}
	\alpha_1 & \alpha_2 &\alpha_2+1 & \alpha_4 \\
	\beta_1 & -\alpha_1 & -\alpha_1+1 & -\alpha_2
	\end{array}\right),\ \mbox{where}\ \mathbf{c}=(\alpha_1, \alpha_2, \alpha_4, \beta_1)\in \mathbb{R}^4,\]
	\[A_{2,r}(\mathbf{c})=\left(
	\begin{array}{cccc}
	\alpha_1 & 0 & 0 & 1 \\
	\beta _1& \beta _2& 1-\alpha_1&0
	\end{array}\right), \ \mbox{where}\ \beta_1\geq 0,\ \mathbf{c}=(\alpha_1, \beta_1, \beta_2)\in \mathbb{R}^3,\]
	\[A_{3,r}(\mathbf{c})=\left(
	\begin{array}{cccc}
	\alpha_1 & 0 & 0 & -1 \\
	\beta _1& \beta _2& 1-\alpha_1&0
	\end{array}\right), \ \mbox{where}\ \beta_1\geq 0,\ \mathbf{c}=(\alpha_1, \beta_1, \beta_2)\in \mathbb{R}^3,\]
	\[A_{4,r}(\mathbf{c})=\left(
	\begin{array}{cccc}
	0 & 1 & 1 & 0 \\
	\beta _1& \beta _2 & 1&-1
	\end{array}\right),\ \mbox{where}\ \mathbf{c}=(\beta_1, \beta_2)\in \mathbb{R}^2,\]
	\[A_{5,r}(\mathbf{c})=\left(
	\begin{array}{cccc}
	\alpha _1 & 0 & 0 & 0 \\
	0 & \beta _2& 1-\alpha _1&0
	\end{array}\right),\ \mbox{where}\ \mathbf{c}=(\alpha_1, \beta_2)\in \mathbb{R}^2,\]
	\[A_{6,r}(\mathbf{c})=\left(
	\begin{array}{cccc}
	\alpha_1& 0 & 0 & 0 \\
	1 & 2\alpha_1-1 & 1-\alpha_1&0
	\end{array}\right),\ \mbox{where}\ \mathbf{c}=\alpha_1\in \mathbb{R},\]
	\[A_{7,r}(\mathbf{c})=\left(
	\begin{array}{cccc}
	\alpha_1 & 0 & 0 & 1 \\
	\beta _1& 1-\alpha_1 & -\alpha_1&0
	\end{array}\right),\ \mbox{where}\ \beta_1\geq 0,\ \mathbf{c}=(\alpha_1, \beta_1)\in \mathbb{R}^2,\]
	\[A_{8,r}(\mathbf{c})=\left(
	\begin{array}{cccc}
	\alpha_1 & 0 & 0 & -1 \\
	\beta _1& 1-\alpha_1 & -\alpha_1&0
	\end{array}\right),\ \mbox{where}\ \beta_1\geq 0,\ \mathbf{c}=(\alpha_1, \beta_1)\in \mathbb{R}^2,\]
	\[A_{9,r}(\mathbf{c})=\left(
	\begin{array}{cccc}
	0 & 1 & 1 & 0 \\
	\beta_1& 1& 0&-1
	\end{array}\right),\ \mbox{where}\ \mathbf{c}=\beta_1\in \mathbb{R},\]
	\[A_{10,r}(\mathbf{c})=\left(
	\begin{array}{cccc}
	\alpha_1 & 0 & 0 & 0 \\
	0 & 1-\alpha_1 & -\alpha_1&0
	\end{array}\right),\ \mbox{where}\ \mathbf{c}=\alpha_1\in\mathbb{R},\]
	\[ A_{11,r}=\left(
	\begin{array}{cccc}
	\frac{1}{3}& 0 & 0 & 0 \\
	1 & \frac{2}{3} & -\frac{1}{3}&0
	\end{array}\right),\
	\ A_{12,r}=\left(
	\begin{array}{cccc}
	0 & 1 & 1 & 0 \\
	1 &0&0 &-1
	\end{array}
	\right),\]
	\[ A_{13,r}=\left(
	\begin{array}{cccc}
	0 & 1 & 1 & 0 \\
	-1 &0&0 &-1\end{array}
	\right),\
	\ A_{14,r}=\left(
	\begin{array}{cccc}
	0 & 1 & 1 & 0 \\
	0 &0&0 &-1\end{array}
	\right),\
	\ A_{15,r}=\left(
	\begin{array}{cccc}
	0 & 0 & 0 & 0 \\
	1 &0& 0 &0
	\end{array}
	\right).\]\end{thm}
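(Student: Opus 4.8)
The plan is to exploit the given partition of $Mat(2\times 4,\mathbb{R})$ into the five trace-classes and to treat each class by the section method of Theorem 2.1. Since (\ref{2.1}) shows that $A\mapsto(Tr_1(A),Tr_2(A))$ transforms by right multiplication by $g^{-1}$, the five classes are $G$-invariant and algebras drawn from different classes are never isomorphic; so it suffices to produce, inside each class, a complete and irredundant list of representatives. The engine throughout is the following principle read off from Theorem 2.1: whenever we can exhibit an algebraic map $P$ on a $G$-invariant piece $V_0$ with $P(\tau(g,A))=P(A)g^{-1}$, the assignment $A\mapsto\tau(P(A),A)$ is constant on orbits and separates them, so it furnishes a canonical form and settles existence and uniqueness at once.

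The first class (where $\{Tr_1(A),Tr_2(A)\}$ is independent) would be handled in one stroke. Set $M(A)=\left(\begin{smallmatrix}Tr_1(A)\\ Tr_2(A)\end{smallmatrix}\right)$; by (\ref{2.1}) one has $M(\tau(g,A))=M(A)g^{-1}$, and on this class $M(A)\in GL(2,\mathbb{R})$, so $P(A):=M(A)$ is an admissible section. Theorem 2.1 then gives that two such algebras are isomorphic iff $\tau(M(A),A)=\tau(M(B),B)$. The normalized matrix $\tau(M(A),A)$ has $Tr_1=(1,0)$ and $Tr_2=(0,1)$ by construction; writing these four scalar conditions in the entries $\alpha_i,\beta_i$ forces $\beta_2=-\alpha_1$, $\beta_3=1-\alpha_1$, $\beta_4=-\alpha_2$ and $\alpha_3=\alpha_2+1$, which is exactly the shape $A_{1,r}(\mathbf c)$. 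Uniqueness is then automatic: a matrix already of the form $A_{1,r}(\mathbf c)$ has $M=I$, hence $P=I$ and $\tau(P,A_{1,r}(\mathbf c))=A_{1,r}(\mathbf c)$, so distinct $\mathbf c$ yield non-isomorphic algebras.

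For the remaining four classes the trace data no longer give an invertible $M(A)$, so there is no global section built from traces alone and the normalization must proceed in two stages. First I would use $g$ to move the surviving nonzero trace covector(s) into a fixed standard position; since $GL(2,\mathbb{R})$ acts transitively on nonzero covectors via $v\mapsto vg^{-1}$ this is always possible, and it cuts the group down to the stabilizer of that position (a one-parameter unipotent subgroup together with a torus, with an extra scalar invariant in class~2 where $Tr_1,Tr_2$ are proportional). Second, I would analyze the residual action of this smaller group on the still-free entries of $A$, extracting secondary invariants and normalizing them one at a time; the reduced forms are $A_{2,r}$ through $A_{15,r}$, with the drop in the number of parameters reflecting the loci where the stabilizer enlarges.

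The main obstacle is that these degenerate strata are exactly where the real (as opposed to algebraically closed) ground field is felt. The residual torus acts on some of the remaining entries through \emph{squares}, so the sign of such an entry is an invariant that cannot be scaled away; this is the source of the discrete split between $A_{2,r}$ and $A_{3,r}$ (the $\pm 1$ in position $(1,4)$) and of the sign restrictions such as $\beta_1\geq 0$. The delicate part of the argument is therefore the bookkeeping in classes 2--5: one must check that every algebra in each stratum reaches one of the listed forms (existence), that no two listed forms in the same stratum are related by a residual $g$ (irredundancy), and in particular that the inequality ranges and the discrete $\pm$ invariants are arranged to select precisely one representative per orbit. Once these real sign phenomena and the stabilizer jumps are correctly tracked, the five classes together exhaust all non-trivial $2$-dimensional real algebras and the list $A_{1,r},\dots,A_{15,r}$ is both complete and irredundant.
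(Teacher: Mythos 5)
Your skeleton coincides with the paper's: separate the five trace classes via (2.1), handle class 1 with the section $P(A)=\left(\begin{smallmatrix}Tr_1(A)\\ Tr_2(A)\end{smallmatrix}\right)$ and Theorem 2.1, and handle classes 2--5 by moving the surviving trace covector to $(1,0)$ and working with the residual stabilizer. Your class-1 argument is complete and in fact cleaner than the paper's: the paper computes $P(A)A(P(A)^{-1}\otimes P(A)^{-1})$ entry by entry and observes the relations, whereas you read the shape $A_{1,r}$ directly off the normalization $P(\tau(P(A),A))=I$, and you get irredundancy for free from the fact that $A_{1,r}(\mathbf{c})$ already has trace matrix $I$. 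That part stands as a proof.

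The gap is in classes 2--5, which is exactly where the content of the theorem -- the specific families $A_{2,r},\dots,A_{15,r}$ with their precise parameter ranges -- is produced, and there your text is a program rather than an argument. You never compute the residual action; in the paper this is the concrete system $\alpha'_1=\alpha_1+2\alpha_2\xi_2+\alpha_4\xi_2^2$, $\alpha'_2=(\alpha_2+\alpha_4\xi_2)\eta_2$, $\alpha'_4=\alpha_4\eta_2^2$, $\beta'_1=\bigl(\beta_1+(1+\lambda-3\alpha_1)\xi_2-3\alpha_2\xi_2^2-\alpha_4\xi_2^3\bigr)/\eta_2$, and everything in the list flows from it: the case tree $\alpha_4\neq 0$, then $\alpha_4=0,\ \alpha_2\neq 0$, then $\alpha_2=\alpha_4=0$ with the further split on $1+\lambda-3\alpha_1$ that separates $A_{5,r}$ from $A_{6,r}$ (and dictates the boundary value $\beta_2=2\alpha_1-1$ being absorbed into $A_{5,r}$); the fact that class 4 yields the column-swapped forms $A_{7,r}$--$A_{10,r}$ while the analogue of $A_{6,r}$ collapses to the single algebra $A_{11,r}$ with $\alpha_1=\frac13$; and the class-5 coincidence that $\left(\begin{smallmatrix}1&0&0&0\\ 0&-1&-1&0\end{smallmatrix}\right)$ is isomorphic to $A_{14,r}$, without which the list would be redundant. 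Your sign bookkeeping ($\alpha'_4=\alpha_4\eta_2^2$ forcing the $A_{2,r}$/$A_{3,r}$ split and the restriction $\beta_1\geq 0$) is correct in outline but asserted, not derived. Finally, irredundancy inside classes 2--5 is not ``automatic'' as it was in class 1, because on these strata no trace-built section exists and Theorem 2.1 is unavailable; it requires verifying that the listed normal forms are pairwise inequivalent under the residual group (the paper's closing ``routine check''), which you flag but do not perform. As written, your proposal proves the theorem only on the first class and reduces the remaining four to a correctly formulated but unexecuted plan.
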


\begin{proof}
{\bf The first subset case.} In this case let $P(A)$ stand for a nonsingular matrix $\left(\begin{array}{cc} \alpha_1+\beta_3& \alpha_2+\beta_4\\ \alpha_1+\beta_2 & \alpha_3+\beta_4\end{array}\right)$  with rows
\[Tr_1(A)=(\alpha _1+\beta _3, \alpha _2+\beta _4),\ Tr_2(A)=(\alpha _1+\beta _2, \alpha _3+\beta _4).\] Due to (\ref{2.1}) we have identity \[P(g A(g^{-1})^{\otimes 2})= P(A)g^{-1},\] which means that one can apply Theorem 2.1 in our $V=\mathbb{R}^8$, $\tau(g,A)= g A(g^{-1})^{\otimes 2}$ case. In this case for $V_0$ one can take \[ V_0=\{A\in Mat(2\times 4,\mathbb{R}): \det(P(A))=\alpha_1(\alpha_3-\alpha_2)+\beta_4(\beta_3-\beta_2)+\alpha_3\beta_3-\alpha_2\beta_2\neq 0.\}\] Therefore,
two-dimensional algebras $\mathbb{A}$, $\mathbb{B}$, given by
their matrices of structure constants $A$, $B \in V_0$, are isomorphic if and only if the equality \[P(B)B(P(B)^{-1}\otimes P(B)^{-1})=P(A)A(P(A)^{-1}\otimes P(A)^{-1})\] holds true.

For $\xi_1=\frac{\alpha_3+\beta_4}{\Delta}$, $\xi_2=-\frac{\alpha_1+\beta_2}{\Delta}$, $\eta_1=-\frac{\alpha_2+\beta_4}{\Delta}$, $\eta_2=\frac{\alpha_1+\beta_3}{\Delta}$, where $\Delta$ stands for $\det(P(A))$, one has
\[P(A)^{-1}=\left(\begin{array}{cc} \xi_1& \eta_1\\ \xi_2& \eta_2\end{array}\right)\] and
$A'=\left(\begin{array}{cccc} \alpha'_1 & \alpha'_2 & \alpha'_3 &\alpha'_4\\ \beta'_1 & \beta'_2 & \beta'_3 &\beta'_4\end{array}\right)=P(A)A(P(A)^{-1}\otimes P(A)^{-1})$ is matrix consisting of columns
\[\left(\begin{array}{c}
-\frac{\beta _1 \eta _1 \xi _1^2}{\Delta }+\frac{\alpha _1 \eta _2 \xi _1^2}{\Delta }+\frac{2 \alpha _1 \eta _1 \xi _1 \xi _2}{\Delta }-\frac{2 \beta _4 \eta _2 \xi _1 \xi _2}{\Delta }-\frac{2 \eta _1 \eta _2 \xi _1 \xi _2}{\Delta ^2}+\frac{\eta _2 \xi _1^2 \xi _2}{\Delta ^2}-\frac{\beta _4 \eta _1 \xi _2^2}{\Delta }+\frac{\alpha _4 \eta _2 \xi _2^2}{\Delta }+\frac{\eta _1 \xi _1 \xi _2^2}{\Delta ^2}  \\
\frac{\beta _1 \xi _1^3}{\Delta }-\frac{3 \alpha _1 \xi _1^2 \xi _2}{\Delta }+\frac{\eta _2 \xi _1^2 \xi _2}{\Delta ^2}+\frac{3 \beta _4 \xi _1 \xi _2^2}{\Delta }+\frac{\eta _1 \xi _1 \xi _2^2}{\Delta ^2}-\frac{2 \xi _1^2 \xi _2^2}{\Delta ^2}-\frac{\alpha _4 \xi _2^3}{\Delta }
\end{array}
\right),\]
\[\left(\begin{array}{c}-\frac{\beta _1 \eta _1^2 \xi _1}{\Delta }+\frac{2 \alpha _1 \eta _1 \eta _2 \xi _1}{\Delta }-\frac{\beta _4 \eta _2^2 \xi _1}{\Delta }-\frac{\eta _1 \eta _2^2 \xi _1}{\Delta ^2}+\frac{\alpha _1 \eta _1^2 \xi _2}{\Delta }-\frac{2 \beta _4 \eta _1 \eta _2 \xi _2}{\Delta }-\frac{\eta _1^2 \eta _2 \xi _2}{\Delta ^2}+\frac{\alpha _4 \eta _2^2 \xi _2}{\Delta }+\frac{2 \eta _1 \eta _2 \xi _1 \xi _2}{\Delta ^2}\\
\frac{\beta _1 \eta _1 \xi _1^2}{\Delta }-\frac{\alpha _1 \eta _2 \xi _1^2}{\Delta }-\frac{2 \alpha _1 \eta _1 \xi _1 \xi _2}{\Delta }+\frac{2 \beta _4 \eta _2 \xi _1 \xi _2}{\Delta }+\frac{2 \eta _1 \eta _2 \xi _1 \xi _2}{\Delta ^2}-\frac{\eta _2 \xi _1^2 \xi _2}{\Delta ^2}+\frac{\beta _4 \eta _1 \xi _2^2}{\Delta }-\frac{\alpha _4 \eta _2 \xi _2^2}{\Delta }-\frac{\eta _1 \xi _1 \xi _2^2}{\Delta ^2}
\end{array}
\right),\]

\[\left(\begin{array}{c} -\frac{\beta _1 \eta _1^2 \xi _1}{\Delta }+\frac{2 \alpha _1 \eta _1 \eta _2 \xi _1}{\Delta }-\frac{\beta _4 \eta _2^2 \xi _1}{\Delta }-\frac{\eta _1 \eta _2^2 \xi _1}{\Delta ^2}+\frac{\eta _2^2 \xi _1^2}{\Delta ^2}+\frac{\alpha _1 \eta _1^2 \xi _2}{\Delta }-\frac{2 \beta _4 \eta _1 \eta _2 \xi _2}{\Delta }-\frac{\eta _1^2 \eta _2 \xi _2}{\Delta ^2}+\frac{\alpha _4 \eta _2^2 \xi _2}{\Delta }+\frac{\eta _1^2 \xi _2^2}{\Delta ^2}\\
\frac{\beta _1 \eta _1 \xi _1^2}{\Delta }-\frac{\alpha _1 \eta _2 \xi _1^2}{\Delta }+\frac{\eta _2^2 \xi _1^2}{\Delta ^2}-\frac{2 \alpha _1 \eta _1 \xi _1 \xi _2}{\Delta }+\frac{2 \beta _4 \eta _2 \xi _1 \xi _2}{\Delta }-\frac{\eta _2 \xi _1^2 \xi _2}{\Delta ^2}+\frac{\beta _4 \eta _1 \xi _2^2}{\Delta }+\frac{\eta _1^2 \xi _2^2}{\Delta ^2}-\frac{\alpha _4 \eta _2 \xi _2^2}{\Delta }-\frac{\eta _1 \xi _1 \xi _2^2}{\Delta ^2}\end{array}
\right),\]

\[\left(\begin{array}{c} -\frac{\beta _1 \eta _1^3}{\Delta }+\frac{3 \alpha _1 \eta _1^2 \eta _2}{\Delta }-\frac{3 \beta _4 \eta _1 \eta _2^2}{\Delta }-\frac{2 \eta _1^2 \eta _2^2}{\Delta ^2}+\frac{\alpha _4 \eta _2^3}{\Delta }+\frac{\eta _1 \eta _2^2 \xi _1}{\Delta ^2}+\frac{\eta _1^2 \eta _2 \xi _2}{\Delta ^2}\\
\frac{\beta _1 \eta _1^2 \xi _1}{\Delta }-\frac{2 \alpha _1 \eta _1 \eta _2 \xi _1}{\Delta }+\frac{\beta _4 \eta _2^2 \xi _1}{\Delta }+\frac{\eta _1 \eta _2^2 \xi _1}{\Delta ^2}-\frac{\alpha _1 \eta _1^2 \xi _2}{\Delta }+\frac{2 \beta _4 \eta _1 \eta _2 \xi _2}{\Delta }+\frac{\eta _1^2 \eta _2 \xi _2}{\Delta ^2}-\frac{\alpha _4 \eta _2^2 \xi _2}{\Delta }-\frac{2 \eta _1 \eta _2 \xi _1 \xi _2}{\Delta ^2}\end{array}
\right).\]

From the above it can be easily seen that $\alpha'_1=-\beta'_2$, $\alpha'_3=\alpha'_2+1$, $\beta'_3=\beta'_2+1$ and $\beta'_4=-\alpha'_2,$ i.e.
\[A'=\left(
\begin{array}{cccc}
\alpha'_1 & \alpha'_2 &\alpha'_2+1 & \alpha'_4 \\
\beta'_1 & -\alpha'_1 & -\alpha'_1+1 & -\alpha'_2
\end{array}\right).\]
Therefore, the main role in the finding of $A'$ is played by the functions

$\alpha'_1=-\frac{\beta _1 \eta _1 \xi _1^2}{\Delta }+\frac{\alpha _1 \eta _2 \xi _1^2}{\Delta }+\frac{2 \alpha _1 \eta _1 \xi _1 \xi _2}{\Delta }-\frac{2 \beta _4 \eta _2 \xi _1 \xi _2}{\Delta }-\frac{2 \eta _1 \eta _2 \xi _1 \xi _2}{\Delta ^2}+\frac{\eta _2 \xi _1^2 \xi _2}{\Delta ^2}-\frac{\beta _4 \eta _1 \xi _2^2}{\Delta }+\frac{\alpha _4 \eta _2 \xi _2^2}{\Delta }+\frac{\eta _1 \xi _1 \xi _2^2}{\Delta ^2},$

$\beta'_1=\frac{\beta _1 \xi _1^3}{\Delta }-\frac{3 \alpha _1 \xi _1^2 \xi _2}{\Delta }+\frac{\eta _2 \xi _1^2 \xi _2}{\Delta ^2}+\frac{3 \beta _4 \xi _1 \xi _2^2}{\Delta }+\frac{\eta _1 \xi _1 \xi _2^2}{\Delta ^2}-\frac{2 \xi _1^2 \xi _2^2}{\Delta ^2}-\frac{\alpha _4 \xi _2^3}{\Delta }
,$

$\alpha'_2=-\frac{\beta _1 \eta _1^2 \xi _1}{\Delta }+\frac{2 \alpha _1 \eta _1 \eta _2 \xi _1}{\Delta }-\frac{\beta _4 \eta _2^2 \xi _1}{\Delta }-\frac{\eta _1 \eta _2^2 \xi _1}{\Delta ^2}+\frac{\alpha _1 \eta _1^2 \xi _2}{\Delta }-\frac{2 \beta _4 \eta _1 \eta _2 \xi _2}{\Delta }-\frac{\eta _1^2 \eta _2 \xi _2}{\Delta ^2}+\frac{\alpha _4 \eta _2^2 \xi _2}{\Delta }+\frac{2 \eta _1 \eta _2 \xi _1 \xi _2}{\Delta ^2},$

$\alpha'_4=-\frac{\beta _1 \eta _1^3}{\Delta }+\frac{3 \alpha _1 \eta _1^2 \eta _2}{\Delta }-\frac{3 \beta _4 \eta _1 \eta _2^2}{\Delta }-\frac{2 \eta _1^2 \eta _2^2}{\Delta ^2}+\frac{\alpha _4 \eta _2^3}{\Delta }+\frac{\eta _1 \eta _2^2 \xi _1}{\Delta ^2}+\frac{\eta _1^2 \eta _2 \xi _2}{\Delta ^2}.$

It can be verified that these functions can take any values in $\mathbb{R}.$ Therefore, the values of $\alpha'_1,\beta'_1, \alpha'_2, \alpha'_4$ also can be any elements in $\mathbb{R}$. Using obvious re notations one can list the following non-isomorphic ``canonical'' algebras from the first subset, given by their MSCs as \[
A_{1,r}(\mathbf{c})=\left(
\begin{array}{cccc}
\alpha_1 & \alpha_2 &\alpha_2+1 & \alpha_4 \\
\beta_1 & -\alpha_1 & -\alpha_1+1 & -\alpha_2
\end{array}\right),\ \mbox{where}\ \mathbf{c}=(\alpha_1, \alpha_2, \alpha_4, \beta_1) \in \mathbb{R}^4.
\]
For any algebra from the first subset there exists a unique algebra from   $\{A_{1}(\mathbf{c}):\ \mathbf{c}\in \mathbb{R}^4\}$ which is isomorphic to that algebra.

{\bf The second and third subset cases.} In these cases one can make $Tr_1(A)g=(1,0)$. It implies that $Tr_2(A)g=(\lambda,0)$. Therefore, we can assume that  $Tr_1(A)=(\alpha _1+\beta _3,\alpha _2+\beta _4)=(1,0), Tr_2(A)=( \lambda(\alpha _1+\beta _3),\lambda(\alpha _2+\beta _4)=(\lambda,0)$. Here if $\lambda$ is zero we can also cover the third subset. So let us consider
\[A= \left(
\begin{array}{cccc}
\alpha _1 & \alpha _2 & \lambda  \alpha _2+(\lambda -1)\beta _4 & \alpha _4 \\
\beta _1 & (\lambda -1)\alpha _1+\lambda  \beta _3 & \beta _3 & \beta _4
\end{array}
\right)=\left(
\begin{array}{cccc}
\alpha _1 & \alpha _2 & \alpha _2 & \alpha _4 \\
\beta _1 & \lambda-\alpha _1 & 1-\alpha _1& -\alpha _2
\end{array}\right),\] with respect to $g\in GL(2,\mathbb{R})$ of the form \[g^{-1}=\left(\begin{array}{cc} 1& 0\\ \xi_2& \eta_2\end{array}\right),\] as far as
$(1,0)g^{-1}=(1,0)$ if and only if $g^{-1}$ is of the above form. In this case for the entries of $A'$ we have $A'=\left(
\begin{array}{cccc}
\alpha'_1 & \alpha'_2 & \alpha'_2 & \alpha'_4 \\
\beta'_1 & \lambda-\alpha'_1 & 1-\alpha'_1& -\alpha'_2
\end{array}\right)=gA(g^{-1})^{\otimes 2}$, where $\lambda$ is same for the $A$ and $A'$,
one has\\
$\alpha '_1=\frac{1}{\Delta }(\alpha _1 \eta _2 +2\alpha _2\eta _2\xi _2+\alpha _4 \eta _2 \xi _2^2)=\alpha _1+2\alpha _2\xi _2+\alpha _4\xi _2^2,$\\
$\alpha '_2=\frac{-1}{\Delta }(-\alpha _2\eta^2_2 -\alpha _4 \eta _2^2 \xi _2)=(\alpha _2+\alpha _4\xi _2)\eta _2,$\\
$\alpha '_4=\frac{-1}{\Delta }(\beta _1 \eta _1^3+[(\lambda-2)\alpha _1+(1+\lambda)\beta _3]\eta _1^2 \eta _2-[(1+\lambda)\alpha _2+(\lambda-2)\beta _4]\eta _1 \eta _2^2-\alpha _4 \eta _2^3)=\alpha _4 \eta_2^2,$\\
$\beta'_1=\frac{1}{\Delta }(\beta _1 \xi _1^3+[(\lambda-2)\alpha _1 +(\lambda+1)\beta _3]\xi _1^2 \xi _2-[(1+\lambda)\alpha _2+(\lambda-2)\beta _4] \xi _1 \xi _2^2-\alpha _4 \xi _2^3)=
\frac{\beta _1+(1+\lambda-3 \alpha _1)\xi _2-3 \alpha _2 \xi _2^2-\alpha _4 \xi _2^3}{\eta_2}.$

So it is enough to consider the system
\[\alpha '_1=\alpha _1+2 \alpha _2 \xi _2+\alpha _4 \xi _2^2,\ \
\alpha '_2=(\alpha _2+\alpha_4\xi_2) \eta _2,\ \
\alpha '_4=\alpha _4 \eta _2^2,\ \
\beta '_1=\frac{\beta _1+(1+\lambda-3\alpha _1)\xi _2-3 \alpha _2 \xi _2^2-\alpha _4 \xi _2^3}{\eta _2}.\]

1. $\alpha_4\neq 0$ case. In this case one can make \[\alpha '_2=0, \alpha '_4=\pm 1, \alpha '_1=\alpha_1-\frac{\alpha^2_2}{\alpha_4}, \beta '_1=\pm\sqrt{|\alpha_4|}\left(\beta _1-\left(1+\lambda-3\alpha _1\right)\frac{\alpha_2}{\alpha_4}-2\frac{\alpha^3_2}{\alpha^2_4}\right)\]
and
once again using re notation one can represent the corresponding ``canonical'' MSCs as 
\[ A_{2,r}(\mathbf{c})=\left(
\begin{array}{cccc}
\alpha_1 & 0 & 0 & 1 \\
\beta _1& \beta_2 & 1-\alpha_1&0
\end{array}\right),\ A_{3,r}(\mathbf{c})=\left(
\begin{array}{cccc}
\alpha_1 & 0 & 0 & -1 \\
\beta _1& \beta_2 & 1-\alpha_1&0
\end{array}\right),\] where $\mathbf{c}=(\alpha_1, \beta_1, \beta_2)\in \mathbb{R}^3,$ as far as the functions $\alpha_1-\frac{\alpha^2_2}{\alpha_4}$, $\sqrt{|\alpha_4|}(\beta _1-(1+\lambda-3\alpha _1)\frac{\alpha_2}{\alpha_4}-2\frac{\alpha^3_2}{\alpha^2_4})$, $\lambda-(\alpha_1-\frac{\alpha^2_2}{\alpha_4} ) $ may have any values in $\mathbb{R}$. Note that there are isomorphisms
$\left(
\begin{array}{cccc}
\alpha_1 & 0 & 0 & 1 \\
\beta _1& \beta_2 & 1-\alpha_1&0
\end{array}\right)\simeq$ \[\left(
\begin{array}{cccc}
\alpha_1 & 0 & 0 & 1 \\
-\beta _1& \beta_2 & 1-\alpha_1&0
\end{array}\right),\ \left(
\begin{array}{cccc}
\alpha_1 & 0 & 0 & -1 \\
\beta _1& \beta_2 & 1-\alpha_1&0
\end{array}\right)\simeq\left(
\begin{array}{cccc}
\alpha_1 & 0 & 0 & -1 \\
-\beta _1& \beta_2 & 1-\alpha_1&0
\end{array}\right).\]
2. $\alpha _4=0$ case.

2 - a). If $\alpha _2\neq 0$ then one can make
$\alpha '_1=0$, $\alpha '_2=1$, $\beta '_1=\alpha_2\beta _1-\frac{2+2\lambda-3\alpha _1}{4}\alpha_1$ to get the following set of canonical matrices of structure constants
\[ A_{4,r}(\mathbf{c})=\left(
\begin{array}{cccc}
0 & 1 & 1 & 0 \\
\beta _1& \beta_2 & 1&-1
\end{array}\right),\ \mbox{where}\ \mathbf{c}=(\beta_1, \beta_2)\in \mathbb{R}^2.\]

2 - b). If $\alpha _2=0$ then $\alpha '_1=\alpha _1,$ $\alpha '_2=0,$ $\alpha '_4=0,$ $\beta '_1=\frac{\beta _1+(1+\lambda-3\alpha _1)\xi _2}{\eta _2}.$

2 - b) - 1. If $1+\lambda-3\alpha _1\neq 0$, that is $\lambda-\alpha _1\neq 2\alpha_1-1$,  one can make $\beta'_1=0$ to get
\[A_{5,r}(\mathbf{c})=\left(
\begin{array}{cccc}
\alpha _1 & 0 & 0 & 0 \\
0 & \beta_2 & 1-\alpha _1&0
\end{array}\right),\ \mbox{where}\ \mathbf{c}=(\alpha_1, \beta_2) \in \mathbb{R}^2, \ \mbox{with}\  \beta_2\neq 2\alpha_1-1.\]

2 - b) - 2. If $1+\lambda-3\alpha _1=0$ and $\beta_1\neq 0$ one can make $\beta'_1=1$ to get
\[A_{6,r}(c)=\left(
\begin{array}{cccc}
\alpha_1 & 0 & 0 & 0 \\
1 & 2\alpha_1-1 & 1-\alpha_1&0
\end{array}\right),\ \mbox{where}\ c=\alpha_1\in \mathbb{R},\] if $\beta_1=0$ one has $\lambda-\alpha _1= 2\alpha_1-1$ and therefore
$A'=\left(
\begin{array}{cccc}
\alpha _1 & 0 & 0 & 0 \\
0 & 2\alpha _1-1 & 1-\alpha _1&0
\end{array}\right)$\\ which is $A_{5,r}(\mathbf{c})$ with $\beta_2=2\alpha_1-1$.

{\bf The fourth subset case.}
By the similar justification as in the second and third subsets case it is enough to consider  \[ A=\left(
\begin{array}{cccc}
\alpha _1 & \alpha _2 & \alpha _2 & \alpha _4 \\
\beta _1 & 1-\alpha _1 & -\alpha _1 & -\alpha _2
\end{array}
\right),\ \mbox{and}\ \ g^{-1}=\left(\begin{array}{cc} 1& 0\\ \xi_2& \eta_2\end{array}\right),\
\mbox{where one has}\]
\[\alpha'_1= \alpha _1+2 \alpha _2 \xi _2+\alpha _4 \xi _2^2 ,\ \
\alpha'_2= (\alpha _2+\alpha _4\xi _2)\eta_2,\ \
\alpha'_4=\alpha _4 \eta _2^2,\ \
\beta'_1=\frac{\beta _1+\xi _2-3 \alpha _1 \xi _2-3 \alpha _2 \xi _2^2-\alpha _4 \xi _2^3}{\eta _2}.\]

Therefore we get the canonical MSCs as follows
\[A_{7,r}(\mathbf{c})=\left(
\begin{array}{cccc}
\alpha_1 & 0 & 0 & 1 \\
\beta _1& 1-\alpha_1 & -\alpha_1&0
\end{array}\right)\simeq\left(
\begin{array}{cccc}
\alpha_1 & 0 & 0 & 1 \\
-\beta _1& 1-\alpha_1 & -\alpha_1&0
\end{array}\right),\] \[ A_{8,r}(\mathbf{c})=\left(
\begin{array}{cccc}
\alpha_1 & 0 & 0 & -1 \\
\beta _1& 1-\alpha_1 & -\alpha_1&0
\end{array}\right)\simeq\left(
\begin{array}{cccc}
\alpha_1 & 0 & 0 & -1 \\
-\beta _1& 1-\alpha_1 & -\alpha_1&0
\end{array}\right),\ \mbox{where}\ \mathbf{c}=(\alpha_1, \beta_1) \in \mathbb{R}^2,\]
(they are $A_{2,r}(\mathbf{c})$ and $A_{3,r}(\mathbf{c})$, respectively, where the $2^{nd}$ and $3^{rd}$ columns are interchanged and $\alpha_1+\beta_2=0$) or
\[ A_{9,r}(\mathbf{c})=\left(
\begin{array}{cccc}
0 & 1 & 1 & 0 \\
\beta _1& 1& 0&-1
\end{array}\right),\ \mbox{where}\ \mathbf{c}=\beta_1\in \mathbb{R},\]
(it is $A_{4,r}(\mathbf{\mathbf{c}})$, where the $2^{nd}$ and $3^{rd}$ columns are interchanged and $\beta_2=0$) or
\[A_{10,r}(\mathbf{c})=\left(
\begin{array}{cccc}
\alpha_1 & 0 & 0 & 0 \\
0 & 1-\alpha_1 & -\alpha_1&0
\end{array}\right),\ \mbox{where}\ \mathbf{c}=\alpha_1\in \mathbb{F},\]
(it is $A_{5,r}(\mathbf{c})$, where the $2^{nd}$ and $3^{rd}$ columns are interchanged and $\alpha_1+\beta_2=0$) or
\[A_{11,r}=\left(
\begin{array}{cccc}
\frac{1}{3} & 0 & 0 & 0 \\
1 & \frac{2}{3} & -\frac{1}{3} &0
\end{array}\right)\] (it is $A_{6,r}(\mathbf{c})$, where the $2^{nd}$ and $3^{rd}$ columns are interchanged and $3\alpha_1-1=0$).

{\bf The fifth subset case.} In this case
\[A=\left(
\begin{array}{cccc}
\alpha _1 & \alpha _2 & \alpha _2 & \alpha _4 \\
\beta _1 & -\alpha _1 & -\alpha _1 & -\alpha _2
\end{array}
\right),\ g^{-1}=\left(\begin{array}{cc} \xi_1& \eta_1\\ \xi_2& \eta_2\end{array}\right)\mbox{and}\
A'=\left(
\begin{array}{cccc}
\alpha' _1 & \alpha' _2 & \alpha' _2 & \alpha' _4 \\
\beta' _1 & -\alpha' _1 & -\alpha' _1 & -\alpha' _2
\end{array}
\right)\]
such that:

$\alpha' _1=\frac{1}{\Delta }\left(-\beta _1 \eta _1 \xi _1^2+\alpha _1 \eta _2 \xi _1^2+2 \alpha _1 \eta _1 \xi _1 \xi _2+2 \alpha _2 \eta _2 \xi _1 \xi _2+\alpha _2 \eta _1 \xi _2^2+\alpha _4 \eta _2 \xi _2^2\right),$

$\alpha' _2=\frac{-1}{\Delta }\left(\beta _1 \eta _1^2 \xi _1-2 \alpha _1 \eta _1 \eta _2 \xi _1-\alpha _2 \eta _2^2 \xi _1-\alpha _1 \eta _1^2 \xi _2-2 \alpha _2 \eta _1 \eta _2 \xi _2-\alpha _4 \eta _2^2 \xi _2\right),$

$\alpha' _4=\frac{-1}{\Delta }\left(\beta _1 \eta _1^3-3 \alpha _1 \eta _1^2 \eta _2-3 \alpha _2 \eta _1 \eta _2^2-\alpha _4 \eta _2^3\right),$

$\beta' _1=\frac{1}{\Delta }\left(\beta _1 \xi _1^3-3 \alpha _1 \xi _1^2 \xi _2-3 \alpha _2 \xi _1 \xi _2^2-\alpha _4 \xi _2^3\right).$

If $\alpha _4\neq 0$ by making $\frac{\eta_2}{\eta_1}$ equal to any root of the polynomial $\beta _1-3 \alpha _1t-3 \alpha _2t^2-\alpha _4t^3$ one can make $\alpha'_4=0$. Therefore, further it is assumed that $\alpha _4= 0$.

Let us consider $g$ with $\eta_1=0$ to have $\alpha'_4=0$. In this case $\Delta=\det(g)=\xi_1\eta_2$ and
\[\alpha' _1=\xi _1\left(\alpha _1+2\alpha _2\frac{\xi_2}{\xi_1}\right),\ \ \alpha' _2=\eta_2\alpha _2,\ \  \beta' _1=\frac{ \xi _1^2}{\eta_2}\left(\beta _1-3 \alpha _1 \frac{\xi_2}{\xi_1}-3 \alpha _2(\frac{\xi_2}{\xi_1})^2\right).\]

If $\alpha_2\neq 0$ one can consider
$\frac{\xi_2}{\xi_1}=\frac{-\alpha_1}{2\alpha_2}$,  $\eta_2=\alpha^{-1}_2$ to get
$\alpha' _1=0$, $\alpha' _2=1$ and
$\beta'_1=\xi_1^2\frac{3\alpha^2_1+4\alpha^2_2}{4}$. Therefore one can make $\beta' _1$ equal to $\pm 1$ or 0, depending on $3\alpha^2_1+4\alpha^2_2$, to get
\[A_{12,r}=\left(
\begin{array}{cccc}
0 & 1 & 1 & 0 \\
1 &0&0 &-1\end{array}
\right),\ \mbox{or}\ A_{13,r}=\left(
\begin{array}{cccc}
0 & 1 & 1 & 0 \\
-1 &0&0 &-1\end{array}
\right),\ \mbox{ or}\ A_{14,r}=\left(
\begin{array}{cccc}
0 & 1 & 1 & 0 \\
0 &0&0 &-1\end{array}
\right).\] 

If $\alpha_2= 0$ then $\alpha' _2=\alpha' _4=0$ and
$\alpha' _1=\xi _1\alpha _1,$
$\beta' _1=\frac{\xi^2_1}{\eta_2 }(\beta _1-3\alpha_1\frac{\xi_2}{\xi_1}).$ Therefore if
$\alpha_1\neq 0$ one can make $\alpha'_1=1$,$\beta' _1=0$ to get
\[A'=\left(
\begin{array}{cccc}
1 & 0 & 0 & 0 \\
0 &-1&-1 &0\end{array}
\right)\]  which is isomorphic to  $A_{14,r}$, if $\alpha _1=0$ then $\alpha'_1= 0$ and one can make $\beta'_1=1$ to come to \[A_{15,r}=\left(
\begin{array}{cccc}
0 & 0 & 0 & 0 \\
1 &0& 0 &0
\end{array}
\right).\]

A routine check in each subset case shows that the corresponding algebras presented above are not isomorphic.\end{proof} 
\subsection{Classification of two-dimensional commutative and commutative Jordan algebras}

In this and next subsection we consider some applications of Theorem 2.3.
Commutativity of an 2-dimensional algebra, in terms of its MSC, means equality of its $2^{nd}$ and $3^{rd}$ columns and therefore due to Theorem 2.3 we have the following complete classification of two-dimensional commutative ( not necessarily associative) real algebras.
\begin{thm} Any non-trivial 2-dimensional real commutative  algebra is isomorphic to only one of the following listed, by their matrices of structure constants, commutative algebras:
\[A_{1,r,c}(\mathbf{c})=\left(
\begin{array}{cccc}
\alpha_1 & 0 & 0 & 1 \\
\beta _1& 1-\alpha_1& 1-\alpha_1&0
\end{array}\right), \ \mbox{where}\ \beta_1\geq 0,\ \mathbf{c}=(\alpha_1, \beta_1)\in \mathbb{R}^2,\]
\[A_{2,r,c}(\mathbf{c})=\left(
\begin{array}{cccc}
\alpha_1 & 0 & 0 & -1 \\
\beta _1& 1-\alpha_1& 1-\alpha_1&0
\end{array}\right), \ \mbox{where}\ \beta_1\geq 0,\ \mathbf{c}=(\alpha_1, \beta_1)\in \mathbb{R}^2,\]
\[A_{3,r,c}(c)=\left(
\begin{array}{cccc}
0 & 1 & 1 & 0 \\
\beta _1& 1 & 1&-1
\end{array}\right),\ \mbox{where}\ c=\beta_1\in \mathbb{R},\]
\[A_{4,r,c}(c)=\left(
\begin{array}{cccc}
\alpha _1 & 0 & 0 & 0 \\
0 & 1-\alpha_1& 1-\alpha _1&0
\end{array}\right),\ \mbox{where}\ c=\alpha_1\in \mathbb{R},\]
\[A_{5,r,c}(c)=\left(
\begin{array}{cccc}
\frac{2}{3}& 0 & 0 & 0 \\
1 & \frac{1}{3}&\frac{1}{3}&0
\end{array}\right),\
\ A_{6,r,c}=\left(
\begin{array}{cccc}
0 & 1 & 1 & 0 \\
1 &0&0 &-1
\end{array}\right),\]
\[A_{7,r,c}=\left(
\begin{array}{cccc}
0 & 1 & 1 & 0 \\
-1 &0&0 &-1
\end{array}\right),\
\ A_{8,r,c}=\left(
\begin{array}{cccc}
0 & 1 & 1 & 0 \\
0 &0&0 &-1
\end{array}\right),\
\ A_{9,r,c}=\left(
\begin{array}{cccc}
0 & 0 & 0 & 0 \\
1 &0&0 &0
\end{array}\right).\]\end{thm}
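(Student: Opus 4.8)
The plan is to deduce this classification directly from Theorem 2.3 rather than redoing the orbit analysis. The decisive observation is that commutativity is an isomorphism-invariant property, so the commutative algebras form a union of full isomorphism classes, each of which already appears, with a unique representative, in the list $A_{1,r}(\mathbf{c}),\dots,A_{15,r}$ of Theorem 2.3. It therefore suffices to single out, among those fifteen canonical families, precisely the ones that are commutative, and the whole argument becomes a finite case check.

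First I would record the coordinate description of commutativity. Writing $A=\left(\begin{smallmatrix}\alpha_1&\alpha_2&\alpha_3&\alpha_4\\\beta_1&\beta_2&\beta_3&\beta_4\end{smallmatrix}\right)$, the identity $\mathbf{u}\cdot\mathbf{v}=eA(u\otimes v)$ together with $u\otimes v-v\otimes u=(u_1v_2-u_2v_1)(0,1,-1,0)^{t}$ shows that $\mathbb{A}$ is commutative if and only if $A$ has equal second and third columns, i.e. $\alpha_2=\alpha_3$ and $\beta_2=\beta_3$. Next I would verify that this condition is stable under $A\mapsto B=gA(g^{-1})^{\otimes 2}$: from the explicit shape of $(g^{-1})^{\otimes 2}$ one computes $(g^{-1})^{\otimes 2}(0,1,-1,0)^{t}=\det(g^{-1})(0,1,-1,0)^{t}$, so the difference of the second and third columns of $B$ equals $\det(g^{-1})\,g$ applied to that of $A$. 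Hence a commutative algebra has equal second and third columns in \emph{every} basis, and the canonical representative assigned to it by Theorem 2.3 is itself commutative. This is the key step that prevents a commutative algebra from hiding behind a non-commutative-looking canonical form.

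With this in hand the proof reduces to imposing $\alpha_2=\alpha_3$, $\beta_2=\beta_3$ on each of the fifteen families. I expect the families $A_{1,r}$, $A_{7,r}$, $A_{8,r}$, $A_{9,r}$, $A_{10,r}$, $A_{11,r}$ to contribute nothing, because there the constraint forces an impossible numerical identity such as $1=0$ or $1-\alpha_1=-\alpha_1$. The families $A_{2,r}$, $A_{3,r}$, $A_{4,r}$, $A_{5,r}$ survive only on the hyperplane where the relevant second-row entry is pinned down (namely $\beta_2=1-\alpha_1$ for $A_{2,r}$ and $A_{3,r}$, and $\beta_2=1$ for $A_{4,r}$), producing the restricted families $A_{1,r,c},\dots,A_{4,r,c}$; the family $A_{6,r}$ survives only at the single value $\alpha_1=\tfrac{2}{3}$, giving $A_{5,r,c}$; and the four rigid algebras $A_{12,r},A_{13,r},A_{14,r},A_{15,r}$ already have equal second and third columns, yielding $A_{6,r,c},\dots,A_{9,r,c}$. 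Reading off the surviving matrices reproduces exactly the list in the statement, and the normalizations (such as $\beta_1\ge 0$) are inherited because the isomorphisms of Theorem 2.3 realizing them fix the second-row entries that encode commutativity.

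Finally, uniqueness is immediate: each $A_{j,r,c}$ is literally a member, or a parametrized subfamily, of the pairwise non-isomorphic list of Theorem 2.3, so distinct entries stay non-isomorphic and distinct parameter values within one family stay non-isomorphic. The only genuine work is the bookkeeping, and the point most in need of care is the completeness of the case analysis — confirming that the six discarded families really are commutativity-free and that the surviving constraints are exactly as claimed — which the invariance computation of the second paragraph underwrites.
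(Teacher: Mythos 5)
Your proposal is correct and follows essentially the same route as the paper: the paper also derives this theorem from Theorem 2.3 by observing that commutativity of the algebra is equivalent to equality of the second and third columns of its MSC (an isomorphism-invariant condition), and then reading off which canonical forms satisfy it. Your explicit verification that $(g^{-1})^{\otimes 2}(0,1,-1,0)^{t}=\det(g^{-1})(0,1,-1,0)^{t}$ and your family-by-family case check simply spell out details the paper leaves implicit, and both your surviving constraints and your discarded families match the paper's list exactly.
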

\begin{defn}An algebra $\mathbb{A}$  with multiplication $\cdot$ given by a bilinear map $(\mathbf{u},\mathbf{v})\mapsto \mathbf{u}\cdot \mathbf{v}$ over a field $\mathbb{F}$
is said to be a commutative Jordan algebra if \[\mathbf{u}\cdot \mathbf{v}=\mathbf{v}\cdot \mathbf{u},\ (\mathbf{u}\cdot \mathbf{v})\mathbf{u}^2=\mathbf{u}
\cdot(\mathbf{v}\cdot \mathbf{u}^2),\ \mbox{whenever}\ \mathbf{u}, \mathbf{v}\in \mathbb{A}.\]\end{defn}
 In terms of structure constants matrix $A$ the second identity can be written in the following equivalent form \[(A(A\otimes A)-A(E\otimes A(E\otimes A)))(u\otimes e_i\otimes u\otimes u)=0, i=1,2\] whenever $u=(u_1,u_2)\in \mathbb{R}^2$,  where $e_1=(1,0), e_2=(0,1)$ are column vectors. In $A=\left(\begin{array}{cccc} \alpha_1 & \alpha_2 & \alpha_2 &\alpha_4\\ \beta_1 & \beta_2 & \beta_2 &\beta_4\end{array}\right)$ case it can be checked that it happens if and only if either \begin{equation}\begin{array}{c}\beta_1=\alpha_4=2\beta_2-\alpha_1=2\alpha_2-\beta_4=0,\ \ \mbox{ or}\\ 
\beta_1\alpha_4-\beta_2\alpha_2=\beta^2_2-\beta_1\beta_4+\alpha_2\beta_1-\alpha_1\beta_2=\alpha^2_2-\alpha_1\alpha_4+\alpha_4\beta_2-\alpha_2\beta_4=
0 \end{array}.\end{equation} 
To get a complete classification of two-dimensional commutative Jordan algebras it is enough to list all algebras from the above list of commutative algebras which satisfy (2.2). Direct checking provides the following list of commutative Jordan algebras.
\begin{thm} Any non-trivial 2-dimensional real commutative Jordan algebra is isomorphic to only one of the following listed, by their matrices of structure constants, algebras:
\[A_{2,r}(1/2,0,1/2)=\left(
\begin{array}{cccc}
1/2 & 0 & 0 & 1 \\
0 &1/2&1/2 &0
\end{array}\right),\ \ A_{3,r}(1/2,0,1/2)=\left(
\begin{array}{cccc}
1/2 & 0 & 0 & -1 \\
0 &1/2&1/2 &0
\end{array}\right),\]
\[ A_{5,r}(2/3,1/3)=\left(
\begin{array}{cccc}
2/3 & 0 & 0 & 0 \\
0 &1/3&1/3 &0
\end{array}\right),\ A_{5,r}(1/2,1/2)=\left(
\begin{array}{cccc}
1/2 & 0 & 0 & 0 \\
0 &1/2&1/2 &0
\end{array}\right),\] \[ A_{5,r}(1,0)=\left(
\begin{array}{cccc}
1 & 0 & 0 & 0 \\
0 &0&0 &0
\end{array}\right), \ A_{15,r}=\left(
\begin{array}{cccc}
0 & 0 & 0 & 0 \\
1 &0&0 &0
\end{array}\right).\]\end{thm}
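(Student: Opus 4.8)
The plan is to use that commutativity and the Jordan identity are both preserved under isomorphism. An isomorphism of algebras is a linear bijection respecting multiplication, hence it carries any polynomial identity---in particular $\mathbf{u}\cdot\mathbf{v}=\mathbf{v}\cdot\mathbf{u}$ together with the Jordan identity of Definition 2.5---to the same identity in the target; thus the class of commutative Jordan algebras is closed under isomorphism. Combined with Theorem 2.4 this shows that the isomorphism classes of two-dimensional real commutative Jordan algebras are exactly those classes whose representatives in the list of Theorem 2.4 satisfy the structure-constant conditions (2.2). Since that list is already irredundant, any subfamily of it is automatically pairwise non-isomorphic, so establishing the theorem reduces to a finite screening: substitute each of the nine commutative families $A_{1,r,c},\dots,A_{9,r,c}$ into (2.2) and record precisely which parameter values survive.

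First I would read off from each family the six free entries of the commutative matrix $A=\left(\begin{array}{cccc}\alpha_1&\alpha_2&\alpha_2&\alpha_4\\ \beta_1&\beta_2&\beta_2&\beta_4\end{array}\right)$ and feed them into the two alternatives of (2.2) separately. The first alternative $\beta_1=\alpha_4=2\beta_2-\alpha_1=2\alpha_2-\beta_4=0$ forces $\beta_1=0$ and $\alpha_4=0$, which already excludes every family except possibly $A_{3,r,c},A_{4,r,c},A_{8,r,c}$; checking the two remaining linear constraints there, it is satisfied only by $A_{4,r,c}$, and then only when $2(1-\alpha_1)-\alpha_1=0$, i.e. $\alpha_1=2/3$. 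The second alternative is a system of three quadratic equations; after substitution each family collapses to at most two polynomial equations in the single remaining parameter, which I would solve directly.

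Carrying out the substitutions, the families $A_{1,r,c}$ and $A_{2,r,c}$ require $\beta_1=0$ and $\alpha_1=1/2$ (their $\alpha_4=\pm1$ rules out the first alternative, while the three quadratics collapse to $\beta_1=0$ and $1-2\alpha_1=0$); the family $A_{4,r,c}$ yields $\alpha_1\in\{2/3,1/2,1\}$, where $2/3$ comes from the first alternative and $1/2,1$ from the factor $(1-\alpha_1)(1-2\alpha_1)$ in the second; the family $A_{9,r,c}$ satisfies the second alternative outright; and the remaining families $A_{3,r,c},A_{5,r,c},A_{6,r,c},A_{7,r,c},A_{8,r,c}$ all fail, each being eliminated by one of the three quadratic equations and none meeting the first alternative. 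Rewriting the survivors in the $A_{i,r}$ notation of Theorem 2.3 gives $A_{2,r}(1/2,0,1/2)$, $A_{3,r}(1/2,0,1/2)$, $A_{5,r}(2/3,1/3)$, $A_{5,r}(1/2,1/2)$, $A_{5,r}(1,0)$ and $A_{15,r}$, which is precisely the claimed list.

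The computations are routine substitution and factoring, so the genuine obstacle is completeness rather than difficulty: one must test \emph{both} disjuncts of (2.2) for every family, since the isolated algebra $A_{5,r}(2/3,1/3)$ arises solely from the first alternative and would be overlooked if only the generic second system were examined. I would therefore organize the verification family-by-family, flagging explicitly that for $A_{4,r,c}$ the two branches contribute different values of $\alpha_1$, and deduce the uniqueness of the representative from the irredundancy of the list in Theorem 2.4.
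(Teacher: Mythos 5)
Your proposal is correct and takes essentially the same approach as the paper: the paper likewise reduces the problem to screening the commutative list of Theorem 2.4 against condition (2.2), stating only that ``direct checking provides the following list,'' while you carry out that check family-by-family (correctly, including the point that $A_{5,r}(2/3,1/3)$ arises solely from the first disjunct of (2.2)) and justify uniqueness by the irredundancy of the list in Theorem 2.4. Your surviving parameters and their translation into the $A_{i,r}$ notation agree exactly with the stated theorem.
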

\begin{rem} In \cite{G} a classification of two-dimensional commutative Jordan algebras over algebraically closed fields is given by the following list of  algebras \[ J_{1}=\left(
\begin{array}{cccc}
1 & 0 & 0 & 0 \\
0 &1&1&1
\end{array}\right), J_{2}=\left(
\begin{array}{cccc}
1 & 0 & 0 & 0 \\
0 &1&0&1
\end{array}\right),\
\ J_{3}=\left(
\begin{array}{cccc}
1 & 1/2 & 1/2 & 0 \\
0 &1/2&1/2 &1
\end{array}\right),\] \[J_{4}=\left(
\begin{array}{cccc}
1 & 0 & 0 & 0 \\
0 &1&1 &0
\end{array}\right),\
\ J_{5}=\left(
\begin{array}{cccc}
1 & 0 & 0 & 0 \\
0 &0&0 &0
\end{array}\right), J_{6}=\left(
\begin{array}{cccc}
0 & 0 & 0 & 0 \\
1 &0&0 &0
\end{array}\right).\] It should be noted that in reality in this list $J_1$ and $J_2$ are isomorphic algebras, indeed $J_1=gJ_2(g^{-1})^{\otimes 2}$ at $g=\left(\begin{array}{cc}1&0\\-1&1\end{array}\right)$. Therefore to make that classification valid one of them should be dropped and, moreover,  
assumed that the characteristic of the basic field is not 2,3. 
In case of the field $\mathbb{R}$ we have the corresponding algebras $J_1,J_3,J_4,J_5,J_6$ as $A_{2,r}(1/2,0,1/2),\ A_{5,r}(2/3,1/3),\ A_{5,r}(1/2,1/2),\ A_{5,r}(1,0), \ A_{15,r}$
, respectively.\end{rem}
\subsection{Classification of 2-dimensional real division algebras}

Now we show how one can derive a classification of $2$-dimensional real division algebras from Theorem 2.3. For a basis free approach to this problem one can see \cite{P1} and for coordinate based \cite{A3, B2}.  

\begin{defn} A finite dimensional algebra $\mathbb{A}$ is said to be division algebra if $\mathbf{u}\cdot \mathbf{v}=0$ is valid if and only if when at least one of $\mathbf{u}$, $\mathbf{v}=0$ is zero.\end{defn}
It is clear that for some nonzero $(x,y)$ and nonzero $(w,z)$ the equality 
\[\left(\begin{array}{cccc} \alpha_1 & \alpha_2 & \alpha_3 &\alpha_4\\ \beta_1 & \beta_2 & \beta_3 &\beta_4\end{array}\right)((x,y)\otimes(z,w))=\left(\begin{array}{c} x(z\alpha_1+w\alpha_2) + y(z\alpha_3+w\alpha_4)\\ x(z\beta_1+w\beta_2) + y(z\beta_3+w\beta_4)\end{array}\right)=0\] holds true  if and only if 
\[\left|\begin{array}{cc} z\alpha_1+w\alpha_2& z\alpha_3+w\alpha_4\\ z\beta_1+w\beta_2&z\beta_3+w\beta_4\end{array}\right|= z^2\left|\begin{array}{cc} \alpha_1& \alpha_3\\ \beta_1&\beta_3\end{array}\right|+zw(\left|\begin{array}{cc} \alpha_1& \alpha_4\\ \beta_1&\beta_4\end{array}\right|+\left|\begin{array}{cc} \alpha_2& \alpha_3\\ \beta_2&\beta_3\end{array}\right|)+w^2\left|\begin{array}{cc} \alpha_2& \alpha_4\\ \beta_2&\beta_4\end{array}\right|=0\ .\]  
The last equality holds true for some nonzero $(w,z)$ if and only if $D=\Delta^2_l-4\Delta_l\Delta_r \geq  0,$ where  \[\Delta_l=\left|\begin{array}{cc} \alpha_1& \alpha_3\\ \beta_1&\beta_3\end{array}\right|,\ \Delta_m=\left|\begin{array}{cc} \alpha_1& \alpha_4\\ \beta_1&\beta_4\end{array}\right|+\left|\begin{array}{cc} \alpha_2& \alpha_3\\ \beta_2&\beta_3\end{array}\right|,\  \Delta_r=\left|\begin{array}{cc} \alpha_2& \alpha_4\\ \beta_2&\beta_4\end{array}\right|.\]

It means that algebra given by MSC as $A=\left(\begin{array}{cccc} \alpha_1 & \alpha_2 & \alpha_3 &\alpha_4\\ \beta_1 & \beta_2 & \beta_3 &\beta_4\end{array}\right)$ is division algebra if and only if \begin{equation}D=\Delta^2_m-4\Delta_l\Delta_r< 0.\end{equation}

Now to classify 2-dimensional real division algebras it is enough to list algebras from Theorem 2.3 for which condition (2.3) holds true.
\begin{thm} Any nontrivial two-dimensional real division algebra is isomorphic to only one algebra from the following listed, by their matrices of structure constants, division algebras:
	$A_{1,r}(\mathbf{c})$, for which $\Delta^2_m-4\Delta_l\Delta_r<0$,
	$A_{2,r}(\mathbf{c})$, for which $\beta^2_1+4\alpha_1(1-\alpha_1)\beta_2< 0$, $A_{3,r}(\mathbf{c})$, for which $\beta^2_1-4\alpha_1(1-\alpha_1)\beta_2< 0$,  $A_{4,r}(\mathbf{c})$, for which $(1-\beta_2)^2-4\beta_1< 0,$ 
	$A_{7,r}(\mathbf{c})$, for which $\beta^2_1-4\alpha^2_1(1-\alpha_1)< 0$, 
	$A_{8,r}(\mathbf{c})$, for which $\beta^2_1+4\alpha^2_1(1-\alpha_1)< 0$, $A_{9,r}(\mathbf{c})$, for which $1-4\beta_1< 0$
	and $A_{12,r}$. \end{thm}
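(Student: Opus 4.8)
The plan is to exploit that being a division algebra is an intrinsic, isomorphism-invariant property: if $\phi\colon\mathbb{A}\to\mathbb{B}$ is an isomorphism then $\phi$ carries zero divisors to zero divisors, so $\mathbb{A}$ is a division algebra exactly when $\mathbb{B}$ is. Combined with Theorem 2.3, which asserts that every nontrivial two-dimensional real algebra is isomorphic to exactly one canonical form $A_{i,r}(\mathbf{c})$, this reduces the problem to deciding, for each family separately, which parameter values $\mathbf{c}$ make $A_{i,r}(\mathbf{c})$ a division algebra. The deciding tool is the criterion (2.3): the algebra with matrix $A$ is a division algebra if and only if $D=\Delta_m^2-4\Delta_l\Delta_r<0$. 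I note that although $\Delta_l,\Delta_m,\Delta_r$ are not themselves isomorphism invariants, the sign condition $D<0$ expresses definiteness of the binary quadratic form $z^2\Delta_l+zw\Delta_m+w^2\Delta_r$, which is precisely the (invariant) absence of nonzero zero divisors, so it may be tested on any single representative. The uniqueness clause then needs no new work: the division algebras form a sublist of the Theorem 2.3 list cut out by the extra inequalities, and distinct members of that list are already pairwise non-isomorphic.

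The computational core is to substitute each $A_{i,r}(\mathbf{c})$ into the three $2\times 2$ determinants defined just before (2.3) and to form $D$. For $A_{2,r}(\mathbf{c})$, for example, one reads off $\Delta_l=\alpha_1(1-\alpha_1)$, $\Delta_m=-\beta_1$ and $\Delta_r=-\beta_2$, so $D=\beta_1^2+4\alpha_1(1-\alpha_1)\beta_2$, exactly the stated inequality; the same substitution applied to $A_{3,r},A_{4,r},A_{7,r},A_{8,r},A_{9,r}$ reproduces the remaining inequalities in the theorem, and for $A_{1,r}(\mathbf{c})$, where the determinants involve all four free parameters with no simplification, the condition is simply left in the implicit form $\Delta_m^2-4\Delta_l\Delta_r<0$. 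I would carry out all fifteen substitutions in turn, recording in each case the polynomial $D$ in the parameters.

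The one step that requires care rather than mere algebra is verifying that the families absent from the list can never be division algebras, i.e. that $D\geq 0$ holds identically on them. The useful structural observation is that for $A_{5,r},A_{6,r},A_{10,r},A_{11,r}$ the fourth column vanishes ($\alpha_4=\beta_4=0$) and the second and third columns have zero top entries ($\alpha_2=\alpha_3=0$); this forces $\Delta_r=0$ and $\Delta_m=0$, whence $D=0$ and the form is degenerate, never definite. For $A_{13,r}$ and $A_{14,r}$ a direct evaluation gives $D=4>0$ and $D=0$, and $A_{15,r}$ gives $D=0$, so all three are excluded; by contrast $A_{12,r}$ yields $\Delta_l=\Delta_r=-1$ and $\Delta_m=0$, hence $D=-4<0$ unconditionally, which is why it appears in the list with no attached inequality. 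Collecting the surviving families together with their inequalities produces precisely the asserted list, and, as noted, the uniqueness is inherited verbatim from Theorem 2.3.
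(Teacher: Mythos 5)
Your proposal is correct and follows essentially the same route as the paper: reduce via Theorem 2.3 to the canonical families, then apply the discriminant criterion (2.3) to each, keeping those where $D=\Delta_m^2-4\Delta_l\Delta_r<0$ can hold and discarding the rest. Your only additions—making the isomorphism-invariance of the division property explicit, and observing structurally that $\alpha_2=\alpha_3=\alpha_4=\beta_4=0$ forces $\Delta_m=\Delta_r=0$ for $A_{5,r},A_{6,r},A_{10,r},A_{11,r}$—are presentational improvements on the paper's case-by-case computation, not a different argument.
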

\begin{proof} In $A=A_{1,r}(\mathbf{c})=\left(
	\begin{array}{cccc}
	\alpha_1 & \alpha_2 &\alpha_2+1 & \alpha_4 \\ \beta_1 & -\alpha_1 & -\alpha_1+1 & -\alpha_2
	\end{array}\right)$ case one has  \[\Delta_l=\alpha_1(-\alpha_1+1)-\beta_1(\alpha_2+1),\ \Delta_m=-\alpha_1\alpha_2-\alpha_4\beta_1+\alpha_1+\alpha_2,\ \Delta_r=-\alpha^2_2+\alpha_1\alpha_4.\]
	Therefore $A_{1,r}(\mathbf{c})$ is division algebra whenever $D=\Delta^2_l-4\Delta_l\Delta_r < 0.$ 
	
		In $A_{2,r}(\mathbf{c})=\left(
	\begin{array}{cccc}
	\alpha_1 & 0 & 0 & 1 \\
	\beta _1& \beta _2& 1-\alpha_1&0
	\end{array}\right)$ case $\Delta_l=\alpha_1(1-\alpha_1),\ \Delta_m=-\beta_1,\ \Delta_r=-\beta_2$ and therefore only the following $A_{2,r}(\mathbf{c})$, for which $\beta^2_1+4\alpha_1(1-\alpha_1)\beta_2< 0$, are division algebras.
	
	In $A_{3,r}(\mathbf{c})=\left(
	\begin{array}{cccc}
	\alpha_1 & 0 & 0 & -1 \\
	\beta _1& \beta _2& 1-\alpha_1&0
	\end{array}\right)$ case $\Delta_l=\alpha_1(1-\alpha_1),\ \Delta_m=\beta_1,\ \Delta_r=\beta_2$ and therefore only $A_{3,r}(\mathbf{c})$, for which $\beta^2_1-4\alpha_1(1-\alpha_1)\beta_2< 0$, are division algebras.
	
		In $A_{4,r}(\mathbf{c})=\left(
	\begin{array}{cccc}
	0 & 1 & 1 & 0 \\
	\beta _1& \beta _2 & 1&-1
	\end{array}\right)$ case $\Delta_l=-\beta_1,\ \Delta_m=1-\beta_2,\ \Delta_r=-1$ and therefore only $A_{4,r}(\mathbf{c})$, for which $(1-\beta_2)^2-4\beta_1< 0,$  are division algebras.
	
	In $A_{5,r}(\mathbf{c})=\left(
	\begin{array}{cccc}
	\alpha _1 & 0 & 0 & 0 \\
	0 & \beta _2& 1-\alpha _1&0
	\end{array}\right)$ case $\Delta_l=\alpha_1(1-\alpha_1),\ \Delta_m=0,\ \Delta_r=0$ and $D=0$. Therefore among $A_{5,r}(\mathbf{c})$ there is no division algebra.
	
	In $A_{6,r}(\mathbf{c})=\left(
	\begin{array}{cccc}
	\alpha_1& 0 & 0 & 0 \\
	1 & 2\alpha_1-1 & 1-\alpha_1&0
	\end{array}\right)$ case $\Delta_l=\alpha_1(1-\alpha_1),\ \Delta_m=0,\ \Delta_r=0$ and $D=0$. Therefore among $A_{6,r}(\mathbf{c})$ there is no division algebra.
	
	In $A_{7,r}(\mathbf{c})=\left(
	\begin{array}{cccc}
	\alpha_1 & 0 & 0 & 1 \\
	\beta _1& 1-\alpha_1 & -\alpha_1&0
	\end{array}\right)$ case $\Delta_l=-\alpha^2_1,\ \Delta_m=-\beta_1,\ \Delta_r=-(1-\alpha_1)$ and therefore only $A_{7,r}(\mathbf{c})$, for which $\beta^2_1-4\alpha^2_1(1-\alpha_1)< 0$, are division algebras.
	
	In $A_{8,r}(\mathbf{c})=\left(
	\begin{array}{cccc}
	\alpha_1 & 0 & 0 & -1 \\
	\beta _1& 1-\alpha_1 & -\alpha_1&0
	\end{array}\right)$ case $\Delta_l=-\alpha^2_1,\ \Delta_m=\beta_1,\ \Delta_r=1-\alpha_1$ and therefore only $A_{8,r}(\mathbf{c})$, for which $\beta^2_1+4\alpha^2_1(1-\alpha_1)< 0$, are division algebras.
	
	In $A_{9,r}(\mathbf{c})=\left(
	\begin{array}{cccc}
	0 & 1 & 1 & 0 \\
	\beta_1 & 1 & 0&-1
	\end{array}\right)$ case $\Delta_l=-\beta_1,\ \Delta_m=-1,\ \Delta_r=-1$, so only $A_{9,r}(\mathbf{c})$, for which $1-4\beta_1< 0$, are division algebras.
	
	In $A_{10,r}(\mathbf{c})=\left(
	\begin{array}{cccc}
	\alpha_1 & 0 & 0 & 0 \\
	0& 1-\alpha_1 & -\alpha_1&0
	\end{array}\right)$ case $\Delta_l=-\alpha^2_1,\ \Delta_m=0,\ \Delta_r=0$ and $D=0$. Therefore among $A_{10,r}(\mathbf{c})$ there is no division algebra.
	
	In $A_{11,r}=\left(
	\begin{array}{cccc}
	\frac{1}{3}& 0 & 0 & 0 \\
	1 & \frac{2}{3} & -\frac{1}{3}&0
	\end{array}\right)$ case $\Delta_l=-1/9,\ \Delta_m=0,\ \Delta_r=0$ and $D=0$. Therefore $A_{11,r}$ is not division algebra.
	
	In $ A_{12,r}=\left(
	\begin{array}{cccc}
	0 & 1 & 1 & 0 \\
	1 &0& 0 &-1
	\end{array}
	\right)$ case $\Delta_l=-1,\ \Delta_m=0,\ \Delta_r=-1$ and $D=-4$, so $A_{12,r}$ is a division algebra.
	
	In $ A_{13,r}=\left(
	\begin{array}{cccc}
	0 & 1 & 1 & 0 \\
	-1 &0& 0 &-1
	\end{array}
	\right)$ case $\Delta_l=1,\ \Delta_m=0,\ \Delta_r=-1$ and $D=4$, so $A_{13,r}$ algebra has divisors of zero.
	
		In $ A_{14,r}=\left(
	\begin{array}{cccc}
	0 & 1 & 1 & 0 \\
	0 &0& 0 &-1
	\end{array}
	\right)$ case $\Delta_l=0,\ \Delta_m=0,\ \Delta_r=-1$ and $D=0$, so $A_{14,r}$ is not division algebra.
	
	In $A_{15,r}=\left(
	\begin{array}{cccc}
	0 & 0 & 0 & 0 \\
	1 &0&0 &0\end{array}
	\right)$ case $\Delta_l=0,\ \Delta_m=0,\ \Delta_r=0$, so $A_{15,r}$ is not  division algebra.\end{proof}
\section{Classification of two-dimensional real evolution algebras}
\begin{defn} An $n$-dimensional algebra $\mathbb{E}$ is said to be an evolution algebra if it admits a basis $\{e^1,e^2,...,e^n\}$ such that $e^ie^j=0$ whenever $i\neq j,\ i,j=1,2,...,n$.\end{defn}

We are going to prove the following result on 2-dimensional real evolution algebras.
\begin{thm} Any nontrivial 2-dimensional real evolution algebra is isomorphic to only one algebra from the following listed, by their matrices of structure constants, evolution algebras:
	\[E_{1,r}(c,b)\simeq E_{1,r}(b,c)=\left(\begin{array}{cccc} 1 & 0 &0 &b\\ c & 0 & 0 &1\end{array}\right),\ \mbox{ where}\ bc\neq 1,\ (b, c)\in\mathbb{R}^2,\]
	\[E_{2,r}(b)=\left(\begin{array}{cccc} 1 & 0 &0 &b\\ 1 & 0 & 0 &0\end{array}\right),\ \mbox{ where}\ b\in\mathbb{R},\
	 E_{3,r}=\left(\begin{array}{cccc} 0 & 0 &0 &1\\ 1 & 0 & 0 &0\end{array}\right),\ 
  \ E_{4,r}=\left(\begin{array}{cccc} 1 & 0 &0 &1\\ 0 & 0 & 0 &0\end{array}\right),\] \[ E_{5,r}=\left(\begin{array}{cccc} 1 & 0 &0 &-1\\ 0 & 0 & 0 &0\end{array}\right),\
  E_{6,r}=\left(\begin{array}{cccc} 1 & 0 &0 &-1\\ -1 & 0 & 0 &1\end{array}\right),\
	\ E_{7,r}=\left(\begin{array}{cccc} 0 & 0 &0 &1\\ 0 & 0 & 0 &0\end{array}\right).\]
\end{thm}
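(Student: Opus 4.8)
My plan rests on the observation that in an evolution basis $\{e_1,e_2\}$ the MSC has vanishing $2^{nd}$ and $3^{rd}$ columns, so $e_1e_2=e_2e_1=0$ and hence every $2$-dimensional evolution algebra is commutative. Consequently each one is isomorphic to exactly one of the commutative normal forms $A_{1,r,c},\dots,A_{9,r,c}$ of Theorem 2.4, and the whole problem reduces to (a) deciding which of these admit an evolution basis and (b) rewriting the survivors in evolution form. For a commutative algebra an evolution basis is nothing but a pair of linearly independent zero-divisors $f_1\cdot f_2=0$ (commutativity makes $f_2\cdot f_1=0$ automatic). Searching for such a pair leads to the same binary quadratic $\Delta_l z^2+\Delta_m zw+\Delta_r w^2$ that governs the division case, so an independent zero-divisor pair forces this form to have a nontrivial real root and a necessary condition is $D=\Delta_m^2-4\Delta_l\Delta_r\ge0$, the exact complement of the division inequality $D<0$ of Theorem 2.9. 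In any evolution basis $\Delta_l=\Delta_r=0$, whence $D=\Delta_m^2\ge0$, which re-proves the necessity invariantly.

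Granting this, I would run through $A_{1,r,c},\dots,A_{9,r,c}$, discard the ones with $D<0$ (these are precisely the commutative division algebras, which have no zero-divisors at all), and for each remaining representative produce an explicit independent zero-divisor pair, pass to that basis, and read off its evolution MSC $\bigl(\begin{smallmatrix}a&0&0&b\\ c&0&0&d\end{smallmatrix}\bigr)$. The convenient invariant for sorting the outcome is $r=\dim\mathbb{A}^2=\operatorname{rank}M$, where $M=\bigl(\begin{smallmatrix}a&b\\ c&d\end{smallmatrix}\bigr)$ holds the coordinates of $e_1^2,e_2^2$: $r=0$ is the excluded trivial algebra, the rank-$2$ part gives $E_{1,r}(c,b)$, $E_{2,r}(b)$ with $b\neq0$, and $E_{3,r}$, while the rank-$1$ part gives the remaining $E_{2,r}(0),E_{4,r},E_{5,r},E_{6,r},E_{7,r}$.

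The rank-$2$ normalization is easy, because when $\det M\neq0$ the condition $M(pr,qs)^{\mathsf T}=0$ for a new evolution basis $f_1=pe_1+qe_2$, $f_2=re_1+se_2$ forces $pr=qs=0$, so the evolution basis is unique up to rescaling the two vectors and swapping them; this reduces the admissible changes to $M\mapsto\bigl(\begin{smallmatrix}\lambda a&\mu^2b/\lambda\\ \lambda^2c/\mu&\mu d\end{smallmatrix}\bigr)$ and $M\mapsto\bigl(\begin{smallmatrix}d&c\\ b&a\end{smallmatrix}\bigr)$, whence the number of vanishing diagonal entries of $M$ separates the three families and the swap yields $E_{1,r}(c,b)\simeq E_{1,r}(b,c)$. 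The main obstacle is the rank-$1$ case: there $\ker M\neq0$ creates extra evolution bases, the group of relevant changes is larger, and—most delicately—$D\ge0$ is no longer sufficient for being an evolution algebra. For instance $A_{4,r,c}(\alpha_1)$ with $\alpha_1\neq0,1$ has $D=0$ yet its only zero-divisors are multiples of a single square-zero vector, so it admits no independent pair and is not an evolution algebra. I would dispatch this case with the finer invariant attached to the line $L=\mathbb{A}^2$: for $0\neq w\in L$ one has $w^2=\kappa w$, and whether $\kappa\neq0$ (so $L$ carries an idempotent) or $\kappa=0$ (so $L^2=0$) is isomorphism-invariant; a short normalization inside each alternative produces $E_{2,r}(0),E_{4,r},E_{5,r}$ in the first and $E_{6,r},E_{7,r}$ in the second, and simultaneously rules out the non-evolution degenerations.

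Pairwise non-isomorphism then comes essentially for free from Theorem 2.4: since all seven families are commutative, two of their members can be isomorphic only if they realize the same commutative normal form, so tracking the correspondence $E_{i,r}\leftrightarrow A_{j,r,c}$ built while rewriting confirms the list has no repetitions apart from the recorded $E_{1,r}$ swap. The points demanding the most care are exactly the boundary $D=0$ and rank-$1$ configurations: verifying precisely when a zero-divisor extends to an independent partner, and checking that $E_{2,r}(b)$ is assigned to one family consistently as $b$ passes through $0$, where $\operatorname{rank}M$ drops from $2$ to $1$.
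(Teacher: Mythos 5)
Your proposal is correct, but it takes a genuinely different route from the paper. The paper never leaves the evolution setting: it starts from an arbitrary evolution MSC $E=\left(\begin{smallmatrix} a&0&0&b\\ c&0&0&d\end{smallmatrix}\right)$, computes $gE(g^{-1})^{\otimes 2}$ for a general $g\in GL(2,\mathbb{R})$, imposes that the result is again an evolution MSC (its condition (3.1)), and then normalizes by hand in cases governed by $ad-bc\neq 0$ versus $ad-bc=0$, with subcases on which of $a,d$ vanish and on proportionality of the rows --- essentially the same rank dichotomy for $M=\left(\begin{smallmatrix}a&b\\ c&d\end{smallmatrix}\right)$ that you use, but executed as a direct computation, and with the mutual non-isomorphism of the resulting list left implicit in the case split. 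You instead exploit that every evolution algebra is commutative, so Theorem 2.4 supplies both completeness and --- this is the real dividend --- the ``only one'' claim structurally, once the correspondence $E_{i,r}\leftrightarrow A_{j,r,c}$ is recorded; the burden shifts to deciding which commutative normal forms admit a pair of \emph{independent} zero divisors. Your treatment of that point is sound: the link to the discriminant $D=\Delta_m^2-4\Delta_l\Delta_r$ of Theorem 2.9, the observation that in an evolution basis $\Delta_l=\Delta_r=0$ so $D=(ad-bc)^2\geq 0$, the correct warning that $D\geq 0$ is \emph{not} sufficient (your counterexample $A_{4,r,c}(\alpha_1)$, $\alpha_1\neq 0,1$, indeed has $D=0$ but all its zero divisors lie on one line), and the repair via the invariant alternative on the line $\mathbb{A}^2$ (idempotent versus square-zero) all check out, as does the rigidity argument that for $\det M\neq 0$ an evolution basis is unique up to scaling and swap, which is exactly what produces $E_{1,r}(b,c)\simeq E_{1,r}(c,b)$ and nothing more. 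What your route costs is bookkeeping: you must carry an explicit parameter correspondence between, say, $A_{1,r,c}(\alpha_1,\beta_1)$ with $\beta_1\geq 0$ and $E_{1,r}(c,b)$ with $bc\neq 1$ modulo the swap, and verify it is a bijection, whereas the paper's direct normalization produces the evolution parameters at once. What it buys is a clean proof of uniqueness inherited from Theorem 2.4 (which the paper's proof of Theorem 3.2 never spells out) and a conceptual characterization the paper's computation leaves invisible: the two-dimensional real evolution algebras are precisely the commutative algebras possessing two linearly independent zero divisors.
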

\begin{proof} Let $\mathbb{E}$ be a nontrivial real evolution algebra given by
$E=\left(\begin{array}{cccc} a & 0 & 0 &b\\ c & 0 & 0 &d\end{array}\right)$ and \\ $E'=\left(\begin{array}{cccc} \alpha'_1 & \alpha'_2 & \alpha'_3 &\alpha'_4\\ \beta'_1 & \beta'_2 & \beta'_3 &\beta'_4\end{array}\right)=gE(g^{-1})^{\otimes 2}$, where  $g^{-1}=\left(\begin{array}{cccc} \xi_1& \eta_1\\ \xi_2& \eta_2\end{array}\right)$. For entries of $E'$ one has\\
$\alpha'_1=\frac{1}{\Delta}(\xi^2_1(a\eta_2-c\eta_1)+\xi^2_2(b\eta_2-d\eta_1)),$\\
$\alpha'_2 = \alpha'_3= \frac{1}{\Delta}(\xi_1\eta_1(a\eta_2-c\eta_1)+\xi_2\eta_2(b\eta_2-d\eta_1)),$\\
$\alpha'_4=\frac{1}{\Delta}(\eta^2_1(a\eta_2-c\eta_1)+\eta^2_2(b\eta_2-d\eta_1)),$\\
$\beta'_1=\frac{1}{\Delta}(\xi^2_1(-a\xi_2+c\xi_1)+\xi^2_2(-b\xi_2+d\xi_1)),$\\
$\beta'_2 = \beta'_3=\frac{1}{\Delta}(\xi_1\eta_1(-a\xi_2+c\xi_1)+\xi_2\eta_2(-b\xi_2+d\xi_1)),$\\$\beta'_4=\frac{1}{\Delta}(\eta^2_1(-a\xi_2+c\xi_1)+\eta^2_2(-b\xi_2+d\xi_1)),$ where $\Delta=\xi_1\eta_2-\xi_2\eta_1$.

In particular one has
\[\left(\begin{array}{c} \alpha'_2\\ \beta'_2\end{array}\right)=
\left(\begin{array}{cc} \xi_1 & \eta_1\\ \xi_2 & \eta_2\end{array}\right)^{-1}\left(\begin{array}{cc} a & b\\ c & d\end{array}\right)\left(\begin{array}{c} \xi_1\eta_1\\ \xi_2\eta_2\end{array}\right).\]  Note also that \[\left(\begin{array}{cc} \alpha'_1 & \alpha'_4\\ \beta'_1 & \beta'_4\end{array}\right)=\left(\begin{array}{cc} \xi_1 & \eta_1\\ \xi_2 & \eta_2\end{array}\right)^{-1}\left(\begin{array}{cc} a & b\\ c & d\end{array}\right)\left(\begin{array}{cc}  \xi^2_1 & \eta^2_1\\ \xi^2_2 & \eta^2_2\end{array}\right),\] which shows that $\alpha'_1\beta'_4-\alpha'_4\beta'_1=0$ whenever $ad-bc=0$.

We are going to make \begin{equation}
\alpha'_2 = \alpha'_3=\beta'_2 = \beta'_3=0
\end{equation} and $\alpha'_1,\ \alpha'_4,\ \beta'_1,\ \beta'_4$ as simple as possible. For that we have to consider several cases. 

1. $ad-bc\neq 0$. In this case (3.1) is equivalent to $\xi_1\eta_1= \xi_2\eta_2=0$. Let us consider $g=\left(\begin{array}{cc} \xi_1 & 0\\ 0 & \eta_2\end{array}\right)$. In this case $\Delta=\xi_1\eta_2$ and $ \alpha'_1=a\xi_1,\
\alpha'_4=b\frac{\eta^2_2}{\xi_1},\
\beta'_1=c\frac{\xi^2_1}{\eta_2},\
\beta'_4=d\eta_2.$

Due to $ad-bc\neq 0$ one has the following possibilities.

1-a. $a\neq 0, d\neq 0$. In this case one can make $ \alpha'_1=1,  \beta'_4=1$ to get, after obvious re notation, $E_{1,r}(b,c)=\left(\begin{array}{cccc} 1 & 0 &0 &b\\ c & 0 & 0 &1\end{array}\right)$, where $bc\neq 1$.

1-b. $a\neq 0, d= 0$. In this case $\beta'_4=0 $ and one can make $ \alpha'_1=1,  \beta'_1=1$ to get, after obvious re notation, $E_{2,r}(b)=\left(\begin{array}{cccc} 1 & 0 &0 &b\\ 1 & 0 & 0 &0\end{array}\right)$, where $b\neq 0$.

1-c. $a=0, d\neq 0$. In this case $ \alpha'_1=0$ and one can make $\beta'_4=1,  \alpha'_4=1$ to get, after obvious renotation, $E'=\left(\begin{array}{cccc} 0 & 0 &0 &1\\ c & 0 & 0 &1\end{array}\right)$, where $c\neq 0$. It is isomorphic to $E_{2,r}(c)$.

1-d. $a=0, d= 0$. In this $ \alpha'_1=0,  \beta'_4=0$ and one can make $\beta'_1=\alpha'_4=1$ to get \[E_{3,r}=\left(\begin{array}{cccc} 0 & 0 &0 &1\\ 1 & 0 & 0 &0\end{array}\right).\]

2. $ad-bc= 0$.

2-a. Both $(a,b)$, $(c,d)$ are nonzero and $(c,d)=\lambda (a,b)$ case.  In this case
(3.1) is equivalent to \[a\xi_1\eta_1+b\xi_2\eta_2=0,\
 \alpha'_1=\frac{\eta_2-\lambda\eta_1}{\Delta}(a\xi^2_1+b\xi^2_2),\
\alpha'_4=\frac{\eta_2-\lambda\eta_1}{\Delta}(a\eta^2_1+b\eta^2_2),\]
\[\beta'_1=-\frac{\xi_2-\lambda\xi_1}{\Delta}(a\xi^2_1+b\xi^2_2),\
\beta'_4=-\frac{\xi_2-\lambda\xi_1}{\Delta}(a\eta^2_1+b\eta^2_2).\]

If $a+b\lambda^2\neq 0$ make $\xi_2-\lambda\xi_1=0$. Then $\xi_1\neq 0$, equality $a\xi_1\eta_1+b\xi_2\eta_2=\xi_1(a\eta_1+b\lambda\eta_2)$ implies $a\eta_1+b\lambda\eta_2=0$, in particular $\frac{\eta_2}{\eta_1}=-\frac{a}{b\lambda}$,  $\Delta=\xi_1(\eta_2-\lambda\eta_1)$ and \[\beta'_1=\beta'_4=0,  \alpha'_1=(a+b\lambda^2)\xi_1, \alpha'_4=\frac{\eta^2_1}{\xi_1}\frac{a(a+b\lambda^2)}{b\lambda^2}.\]

It implies that if $b\neq 0$ then one can make $\alpha'_1=1$, and $\alpha'_4=1$ or $-1$ or $0$, depending on $\frac{a}{b\lambda^2}$, to get  
\[E_{4,r}=\left(\begin{array}{cccc} 1 & 0 &0 &1\\ 0 & 0 & 0 &0\end{array}\right)\ \ \mbox{or}\ \
E_{5,r}=\left(\begin{array}{cccc} 1 & 0 &0 &-1\\ 0 & 0 & 0 &0\end{array}\right)\ \ \mbox{or}\ E'=\left(\begin{array}{cccc} 1 & 0 &0 &0\\ 0 & 0 & 0 &0\end{array}\right).\] The last $E'$ is isomorphic to $E_{2,r}(0)$. If $b=0$ then $\eta_1$ have to be zero and $\alpha'_1=a\xi_1$,\ $\alpha'_4=a\frac{\eta^2_1}{\xi_1}=0$. So in this case one can make $\alpha'_1=1$ to come to the same $E'$.

If $a+b\lambda^2= 0$, note in this case $a,b, \lambda$ have to be nonzero, make $\xi_2=\eta_1=0$. Then $\Delta=\xi_1\eta_2$,
and \[   \alpha'_1=a\xi_1,
\alpha'_4=\frac{b\eta^2_2}{\xi_1},
\beta'_1=\frac{a\lambda\xi^2_1}{\eta_2}, \beta'_4=b\lambda\eta_2.\]    
It implies that one can make $\alpha'_1=1, \beta'_4=1$ to get $\alpha'_4=\frac{a}{b\lambda^2}=-1, \beta'_1=\frac{b\lambda^2}{a}=-1$ and 
\[E_{6,r}=\left(\begin{array}{cccc} 1 & 0 &0 &-1\\ -1 & 0 & 0 &1\end{array}\right).\]

2-b. $c=d= 0$. In this case
\[ \alpha'_1=\frac{\eta_2}{\Delta}(a\xi^2_1+b\xi^2_2),\ \
\alpha'_2 = \alpha'_3= \frac{\eta_2}{\Delta}(a\xi_1\eta_1+b\xi_2\eta_2),\ \
\alpha'_4=\frac{\eta_2}{\Delta}(a\eta^2_1+b\eta^2_2),\]
\[\beta'_1=-\frac{\xi_2}{\Delta}(a\xi^2_1+b\xi^2_2),\ \
\beta'_2 = \beta'_3=-\frac{\xi_2}{\Delta}(a\xi_1\eta_1+b\xi_2\eta_2),\ \ \beta'_4=-\frac{\xi_2}{\Delta}(a\eta^2_1+b\eta^2_2).\]

Taking $\xi_2=0$, $\eta_1=0$ results in
\[ \alpha'_1=a\xi_1,\ \alpha'_2=\alpha'_3=0,\ \alpha'_4=\frac{b\eta^2_2}{\xi_1},\
\beta'_1=\beta'_2=\beta'_3=\beta'_4=0.\] 

If $a\neq 0$ then one can make $\alpha'_1=1$ and $\alpha'_4 =1$ or $-1$ or $0$, depending on $\frac{b}{a}$ to get 
\[E_{4,r}=\left(\begin{array}{cccc} 1 & 0 &0 &1\\ 0 & 0 & 0 &0\end{array}\right)\ \mbox{or}\ 
E_{5,r}=\left(\begin{array}{cccc} 1 & 0 &0 &-1\\ 0 & 0 & 0 &0\end{array}\right)\ \mbox{or}\ 
E'=\left(\begin{array}{cccc} 1 & 0 &0 &0\\ 0 & 0 & 0 &0\end{array}\right),\]  respectively. The last $E'$ is isomorphic to $E_{2,r}(0)$.

If $a= 0$ then 
\[ \alpha'_1=0,\ \alpha'_2=\alpha'_3=0,\ \alpha'_4=\frac{b\eta^2_2}{\xi_1},\
\beta'_1=\beta'_2=\beta'_3=\beta'_4=0,\] and one can make $\alpha'_4=1$ to get
$E_{7,r}=\left(\begin{array}{cccc} 0 & 0 &0 &1\\ 0 & 0 & 0 &0\end{array}\right)$. 

2-c. $a=b= 0$. In this case
\[ \alpha'_1=-\frac{\eta_1}{\Delta}(c\xi^2_1+d\xi^2_2),\ \
\alpha'_2 = \alpha'_3= -\frac{\eta_1}{\Delta}(c\xi_1\eta_1+d\xi_2\eta_2),\ \
\alpha'_4=-\frac{\eta_1}{\Delta}(c\eta^2_1+d\eta^2_2),\]
\[\beta'_1=\frac{\xi_1}{\Delta}(c\xi^2_1+d\xi^2_2),\ \
\beta'_2 = \beta'_3=\frac{\xi_1}{\Delta}(c\xi_1\eta_1+d\xi_2\eta_2),\ \ \beta'_4=\frac{\xi_1}{\Delta}(c\eta^2_1+d\eta^2_2),\] which is similar to $c=d=0$ case equalities. A justification, similar to $c=d=0$ case, shows that such algebras are isomorphic to one of previously considered algebras.\end{proof}
\begin{rem} Our classification of $2$-dimensional real evolution algebras agrees with a similar classification given in \cite{M}, they approach also is coordinate based.\end{rem}  
\subsection{The groups of automorphisms and derivation algebras}

Let  $I$ stand for the second identity matrix $\left(
\begin{array}{cc}
1 & 0 \\
0 & 1 
\end{array}
\right).$
If $\mathbb{E}$ is an algebra given by MSC $E$ then its group of automorphisms  $Aut(E)$ can be presented as 
\[Aut(E)=\{g=\left(
\begin{array}{cc}
x & y \\
z & t \\
\end{array}
\right): xt-yz\neq 0 \ \mbox{ and } \  gE-E(g\otimes g)=0.\}\]

\begin{thm} The following equalities are true. \[Aut(E_{1,r}(b,c))=\{I\},\ \mbox{if}\  \mbox{and}\ b\neq c, 
	\ Aut(E_{1,r}(b,b))=\{I, \left(\begin{array}{cc}0&1\\ 1&0\end{array}\right)\},\ \mbox{if}\ b^2\neq 1,\]
\[ Aut(E_{2,r}(b))=\{I\},\ \mbox{if}\ b\neq 0, \
	\ Aut(E_{2,r}(0))=\{\left(
	\begin{array}{cc}
	1 & 0 \\
	t & 1-t \\
	\end{array}
	\right):\ t\neq 1 \}, \]		\[ Aut(E_{3,r})=\{I, \left(
	\begin{array}{cc}
	0 & 1 \\
	1 & 0 \\
	\end{array}
	\right)\}, \ 
	\ Aut(E_{4,r})=Aut(E_{5,r})=\{I, \left(
	\begin{array}{cc}
	1 & 0 \\
	0 & -1 \\
	\end{array}
	\right) \}, \]
	\[Aut(E_{6,r})=\{ \left(\begin{array}{cc}t&1-t\\ 1-t&t\end{array}\right):\ \ t\neq \frac{1}{2}\},\
		\ Aut(E_{7,r})=\{\left(
	\begin{array}{cc}
	t^2 & s \\
	0 & t \\
	\end{array}
	\right) :\ \ t\neq 0,\ s\in \mathbb{R} \}. \]\end{thm}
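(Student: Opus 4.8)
The plan is to compute each automorphism group directly from the defining equation
\[
Aut(E)=\{g=\left(\begin{array}{cc}x&y\\ z&t\end{array}\right):\ xt-yz\neq 0,\ gE-E(g\otimes g)=0\},
\]
exploiting the very special shape of the evolution matrices $E_{i,r}$. Since each $E_{i,r}$ has zeros in its second and third columns, the evolution condition $e^1\cdot e^2=e^2\cdot e^1=0$ is automatic and the equation $gE=E(g\otimes g)$ reduces to a small polynomial system in the four entries $x,y,z,t$. For an evolution algebra given by $E=\left(\begin{array}{cccc}a&0&0&b\\ c&0&0&d\end{array}\right)$, writing out $gE-E(g\otimes g)=0$ produces exactly eight scalar equations (two rows times four columns), but the middle columns collapse, so in practice one gets a handful of genuinely distinct polynomial constraints per case.

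The steps I would carry out, case by case, are as follows. First I would fix $i$ and substitute the explicit numerical/parametric entries $a,b,c,d$ of $E_{i,r}$ into the identity $gE=E(g\otimes g)$; the left side is linear in $g$ while the right side is quadratic in $g$ (through $g\otimes g$, whose entries are the products $x^2,xy,yx,y^2,\dots$). Second, I would read off the resulting equations from the four columns: the first and fourth columns give the ``diagonal'' relations forcing, for example, $x^2,\ xy,\ y^2$ combinations to match $ax+bz$ and $ay+bt$, while the vanishing middle columns give the ``cross'' relations such as $axy+b zt=0$ type constraints that are the real source of rigidity. Third, I would solve this system together with the nondegeneracy $\det g\neq 0$. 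For the generic member $E_{1,r}(b,c)$ with $b\neq c$ the cross relations force $y=z=0$ and then the diagonal relations force $x=t=1$, giving $Aut=\{I\}$; when $b=c$ the symmetry $E_{1,r}(b,c)\simeq E_{1,r}(c,b)$ permits the extra swap $\left(\begin{smallmatrix}0&1\\1&0\end{smallmatrix}\right)$, which one checks satisfies the equation precisely when $b^2\neq1$ (so that the algebra is still in the stated family). The degenerate cases $E_{2,r}(0)$, $E_{6,r}$ and $E_{7,r}$ are the ones where a one-parameter family of solutions survives: there the cross relations become dependent and leave a free parameter $t$ (respectively $s$), which I would parametrize exactly as in the statement and verify directly.

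The main obstacle will be organizing the bookkeeping so that the nondegeneracy condition $\det g\neq0$ is correctly used to discard spurious branches of the polynomial system. In several cases the raw equations admit solutions with $g$ singular (for instance $x=y=0$), and these must be excluded; conversely one must be careful not to divide by a quantity that may vanish when splitting into subcases. Concretely, the delicate points are $E_{2,r}(0)$, where setting $b=0$ removes a constraint and opens up the line $\{(1,0;t,1-t):t\neq1\}$ (the restriction $t\neq1$ being exactly $\det g\neq0$), and $E_{7,r}$, where the nilpotent shape $\left(\begin{smallmatrix}0&0&0&1\\0&0&0&0\end{smallmatrix}\right)$ yields the upper-triangular two-parameter group $\{(t^2,s;0,t):t\neq0\}$ with the relation $x=t^2$ forced by the single surviving quadratic equation. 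For the remaining finite groups ($E_{3,r}$, $E_{4,r}=E_{5,r}$), I would simply exhibit the nontrivial involution and argue that the cross relations admit no other nonsingular solution. The verifications are routine substitutions once the system is written down, so the heart of the proof is the correct case split dictated by which of $a,b,c,d$ vanish, mirroring the case analysis already used in the classification Theorem~3.3.
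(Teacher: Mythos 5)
Your proposal is correct and is exactly the paper's approach: the paper's entire proof is the single remark that one finds all nonsingular solutions of $gE-E(g\otimes g)=0$ for each $E_{i,r}$, which is precisely the case-by-case polynomial solving (with $\det g\neq 0$ used to discard singular branches) that you describe in more detail. One small wording caveat: for $E_{1,r}(b,b)$ the swap $\left(\begin{array}{cc}0&1\\ 1&0\end{array}\right)$ satisfies the equation whenever $b=c$, irrespective of $b^2$; the condition $b^2\neq 1$ is only the membership requirement $bc\neq 1$ for the family, as your parenthetical already indicates.
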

Proof of this result consists of finding out all nonsingular solutions of the equation $gE-E(g\otimes g)=0$ for each $E$ taken from $E_{1,r}(b,c),\ E_{2,r}(b),\ E_{3,r},\ E_{4,r},\ E_{5,r},\ E_{6,r}$  and $E_{7,r}$.

If $\mathbb{E}$ is an algebra given by MSC $E$ then algebra of its derivations $Der(\mathbb{E})$ can be presented as following
\[ Der(\mathbb{E})=\{D=\left(\begin{array}{cc}
x & y \\
z & t \\
\end{array}\right):\  E(D\otimes I+I\otimes D)-DE=0\}\] 

\begin{thm}.The following equalities are true 
	\[Der(E_{1,r}(b,c))=\{0\},\ 
	\ Der(E_{2,r}(b))=\{0\},\ \mbox{if}\  b\neq 0,\]
	\[ Der(E_{2,r}(0))=\{\left(
	\begin{array}{cc}
	0 & 0 \\
	t & -t \\
	\end{array} \right):\ t\in \mathbb{R}\} ,\ \ Der(E_{3,r})=Der(E_{4,r})=Der(E_{5,r})=\{0\},\]
	\[ Der(E_{6,r}=\{\left(
	\begin{array}{cc}
	-t & t \\
	t & -t \\
	\end{array} \right):\ t\in \mathbb{R}\} ,\ \ Der(E_{7,r})=\{\left(
	\begin{array}{cc}
	2t & s \\
	0 & t \\
	\end{array} \right):\ t,s\in \mathbb{R}\}.\]\end{thm}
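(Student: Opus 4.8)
The plan is to read the defining condition $E(D\otimes I+I\otimes D)-DE=0$ as a homogeneous \emph{linear} system in the four entries of $D=\left(\begin{array}{cc}x&y\\ z&t\end{array}\right)$ and to solve it separately for each of the seven normal forms produced by the evolution classification (Theorem 3.2). The decisive simplification is that every evolution algebra on that list has a matrix of structure constants of the common shape $E=\left(\begin{array}{cccc}a&0&0&b\\ c&0&0&d\end{array}\right)$, so I would perform the computation once for such a generic $E$ and only afterwards substitute the concrete constants $(a,b,c,d)$. It is worth noting that this derivation equation is precisely the linearization of the quadratic automorphism equation $gE-E(g\otimes g)=0$ solved in Theorem 3.4, and is correspondingly easier.

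First I would write out
\[ D\otimes I+I\otimes D=\left(\begin{array}{cccc} 2x & y & y & 0\\ z & x+t & 0 & y\\ z & 0 & x+t & y\\ 0 & z & z & 2t\end{array}\right), \]
then multiply on the left by $E$ and subtract $DE$, obtaining
\[ E(D\otimes I+I\otimes D)-DE=\left(\begin{array}{cccc} ax-cy & ay+bz & ay+bz & 2bt-bx-dy\\ 2cx-az-ct & cy+dz & cy+dz & dt-bz\end{array}\right). \]
Setting every entry to zero yields the six governing scalar relations $ax-cy=0$, $ay+bz=0$, $cy+dz=0$, $dt-bz=0$, $2cx-az-ct=0$ and $2bt-bx-dy=0$. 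The whole theorem then reduces to reading off the solution space of this system for each choice of $(a,b,c,d)$.

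Next I would run the seven substitutions. For $E_{1,r}(b,c)$ (so $a=d=1$) the two middle relations give $y=-bz$ and $z=-cy$, hence $(1-bc)z=0$; since $bc\neq 1$ this forces $z=y=x=t=0$ and $Der=\{0\}$. For $E_{2,r}(b)$ ($a=c=1,\ d=0$) one gets $y=0$, then $x=0$ and $bz=0$: when $b\neq 0$ the system collapses to $D=0$, while for $b=0$ only the relation $z=-t$ survives, giving the displayed one-parameter family. The degenerate forms $E_{3,r},E_{4,r},E_{5,r}$ likewise force $D=0$, whereas $E_{6,r}$ ($a=d=1,\ b=c=-1$) leaves the single chain of relations $x=-y$, $z=y$, $t=-y$ producing $\left(\begin{array}{cc}-t&t\\ t&-t\end{array}\right)$, and $E_{7,r}$ ($a=c=d=0,\ b=1$) leaves only $z=0$ and $x=2t$ with $y$ free, giving the stated two-parameter algebra.

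The computation is entirely elementary linear algebra, so there is no genuine obstacle of principle; the work is bookkeeping across the seven cases. The one point demanding real care is invoking the parameter restrictions carried over from Theorem 3.2 (such as $bc\neq 1$ for $E_{1,r}$ and $b\neq 0$ for $E_{2,r}$), since it is exactly these inequalities that suppress the would-be extra derivations and collapse $Der$ to $\{0\}$ in the nondegenerate cases. A final cosmetic step is to rename the free parameter in each one-parameter solution so that its presentation matches the normalization displayed in the statement.
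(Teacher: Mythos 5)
Your proposal is correct and follows essentially the same route as the paper, which simply states that the proof consists of solving the linear system $E(D\otimes I+I\otimes D)-DE=0$ for each of the seven normal forms. Your generic computation of the six scalar relations for $E=\left(\begin{array}{cccc}a&0&0&b\\ c&0&0&d\end{array}\right)$ checks out, and the case-by-case specializations (including the roles of $bc\neq 1$ and $b\neq 0$) reproduce exactly the derivation algebras listed in the theorem.
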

Proof of this result consists of finding out all solutions $D$ of the equation $E(D\otimes I+I\otimes D)-DE=0$ for each $E$ taken from $E_{1,r}(b,c),\ E_{2,r}(b),\ E_{3,r},\ E_{4,r},\ E_{5,r},\ E_{6,r}$ and $E_{7,r}$.

\begin{center}{\textbf{Acknowledgments}}
\end{center} This research is supported by FRGS14-153-0394, MOHE.


\begin{thebibliography}{99}
\bibitem{A1} H. Ahmed, U. Bekbaev, I. Rakhimov, Comlete classification of 2-dimensional algebras, \textit{arXiv 1702.08616 } (2017).
\bibitem{A2} S.C. Althoen and K.D.Hansen. Two-dimensional real algebras with zero divisors. \textit{Acta Sci.Math.(Szeged)} \textbf{56}(1992),23--42.
\bibitem{A3} S.C. Althoen  and Kugler,L.D. When is $\mathbf{R^2}$ a division algebra? \textit{Amer.Math.Monthly} \textbf{90}(1983),625--635.
\bibitem{B} U. Bekbaev, On classification of finite dimensional algebras, \emph{ arXiv:1504.01194} (2015).
\bibitem{B1} J.M. Berm$\acute{u}$dez, J. Fres$\acute{a}$n,J. S.  Hern$\acute{a}$ndez, On the variety of two dimensional real associative algebras. \textit{Int.J.Contemp.Math.Sciences}, \textbf{2}(26) (2007), 1293--1305.
\bibitem{B2} I. Burdujan, Types of nonisomorphic two-dimensional real division algebras. Proceedings of the national conference on algebra (Romanian)(la/c si,1984) An./c t. Univ. "al. I.Cuza" Ia/c si Sec/c t. I a Mat.(N.S) \textbf{31}(1985),92--105.
\bibitem{C}  A.L. Cali and M. Josephy, Two-dimensional real division algebras. \textit{Rev.Un.Mat. Argentina} \textbf{32}(1985),53--63.
\bibitem{D} R. Dur\'{a}n D\'{\i}az, J. Mu\~{n}oz Mesqu\'{e}, A. Peinado Dom\'{\i}nques, Classifying quadratic maps from plane to plane, \textit{Linear Algebra and its Applications}, \textbf{364} (2003), 1--12.
\bibitem{H} H. Encinas, A. Martín del Rey, J. Mu\~{n}oz Mesqu\'{e}, Non-degenerate bilinear alternating maps $f:V\times V\rightarrow V,$  $dim(V)=3$ over an algebraically closed field, \textit{Linear Algebra and its Applications}, \textbf{387} (2004), 69--82.
\bibitem{G} M. Goze, E. Remm, 2-dimensional algebras, \textit{African Journal of Mathematical Physics}, \textbf{10} (2011), 81--91.
\bibitem{P1} H.P. Petersson and M. H$\ddot{u}$bner, Two-dimensional real division algebras revisited.\textit{ Beiträge zur Algebra und Geometrie} \textbf{45}(1) (2004), 29--36. 
\bibitem{P2} H.P. Petersson, The classification of two-dimensional nonassicative algebras, \textit{Result. Math.} \textbf{3} (2000), 120--154.
\bibitem{P3} V. Popov, Generic Algebras: Rational Parametrization and Normal Forms, \textit{arXiv: 1411.6570v2} (2014).
\bibitem{M} Sh.N. Murodov, Classification of two-dimensional real evolution algebras and dynamics of some two-dimensional chains of evolution algebras, \textit{arXiv: 1305.6416v2 [math.DS]} (2013).


\end{thebibliography}
\end{document}